\documentclass[11pt,letterpaper,reqno]{amsart}
\usepackage{amssymb}
\usepackage{amsmath}
\usepackage{amsthm}
\usepackage{amsfonts}
\usepackage{enumitem}
\usepackage{booktabs}
\usepackage{graphicx}
\usepackage{xcolor}
\usepackage[T1]{fontenc}
\usepackage{doi}
\usepackage{comment}
\usepackage{hyperref}
\hypersetup{pdfstartview={FitH}}
\usepackage{bookmark}
\usepackage[capitalize,noabbrev]{cleveref}

\newtheorem{theorem}{Theorem}[section]
\newtheorem{lemma}[theorem]{Lemma}

\newtheorem{problem}[theorem]{Problem}

\theoremstyle{definition}

\newtheorem{remark}[theorem]{Remark}

\crefname{theoremletter}{Theorem}{Theorems}
\Crefname{theoremletter}{Theorem}{Theorems}

\newcommand{\Prob}{\mathbb P}
\newcommand{\E}{\mathbb E}
\newcommand{\Var}{\operatorname{Var}}
\newcommand{\Bin}{\operatorname{Bin}}
\newcommand{\Ber}{\operatorname{Ber}}
\newcommand{\supp}{\operatorname{supp}}

\begin{document}

\title{Irregular subgraph in a regular graph}

\author[T.~Cao]{Tianyue Cao}
\address{School of Mathematical Sciences, Fudan University, Shanghai 200433, P. R. China}
\email{cao\_tianyue@hotmail.com}

\author[Q.~Tang]{Quanyu Tang}
\address{School of Mathematics and Statistics, Xi'an Jiaotong University, Xi'an 710049, P. R. China}
\email{tangquanyu827@gmail.com}

\author[H.~Wu]{Hehui Wu}
\address{Shanghai Center for Mathematical Sciences, Fudan University, Shanghai 200433, P. R. China}
\email{hhwu@fudan.edu.cn}

\subjclass[2020]{Primary 05C07; Secondary 05D40, 05C35}

\keywords{Alon--Wei conjecture, irregular graphs, degree distributions}

\begin{abstract}
A conjecture of Alon and Wei states that, for any $d$-regular graph $G$ with $n$ vertices, there exists a spanning subgraph $H$ such that for all $0\le i\le d$, we have $m(H, i)$, the number of vertices in $H$ with degree $i$, is between $\frac{n}{d+1}-2$ and $\frac{n}{d+1}+2$.  We prove the conjecture for all fixed $d$ when $n$ is sufficiently large. 
More precisely, if
$q=(q_0,\ldots,q_d)$ satisfies
\[
        \sum_{i=0}^d q_i=n,\qquad
        \sum_{i=0}^d i q_i\equiv 0\pmod 2,\qquad
        \left|q_i-\frac{n}{d+1}\right|\le 1
        \quad (0\le i\le d),
\]
then there is a spanning subgraph $H\subseteq G$ such that
\[
        m(H,i)=q_i
        \qquad (0\le i\le d).
\]
\end{abstract}

\maketitle

\section{Introduction}

Let $G$ be a finite simple graph.  For a spanning subgraph $H\subseteq G$ and an
integer $i\ge 0$, write
\[
    m(H,i)=|\{v\in V(G):d_H(v)=i\}|.
\]
The following problem is a conjecture of Alon and Wei
\cite{AlonWei}.

\begin{problem}[Alon--Wei]
\label{prob:alon-wei}
Let $G$ be a finite simple $d$-regular graph on $n$ vertices.  Does there always
exist a spanning subgraph $H\subseteq G$ such that
\[
    \left|m(H,i)-\frac{n}{d+1}\right|\le 2
    \qquad (0\le i\le d)?
\]
\end{problem}

An asymptotic version of Problem~\ref{prob:alon-wei} was first obtained,
in connection with irregularity strength, by Frieze, Gould, Karo\'nski and
Pfender~\cite{FGKP}.  The random threshold construction
\[
    uv\in E(H)
    \quad\Longleftrightarrow\quad
    x(u)+x(v)\ge 1,
\]
where the labels $x(v)$ are independent uniform random variables on $[0,1]$,
already appears in their work.  Alon and Wei proved several further asymptotic
relaxations of Problem~\ref{prob:alon-wei} using this model~\cite{AlonWei}.
Fox, Luo and Pham then proved asymptotic degree balancing in the range
$d=o(n/(\log n)^{12})$~\cite{FoxLuoPham}.  Ma and Xie later introduced a
local-adjustment framework and proved the first bound independent of $n$,
namely an additive $2d^2$ bound; they also proved the cubic case of
Problem~\ref{prob:alon-wei} in a strong form~\cite{MaXie}.  The cubic case was
also resolved independently by Lu\v{z}ar, Przyby\l{}o and Sot\'ak
\cite{LuzarPrzybyloSotak}.

A very recent preprint of Montgomery, Pokrovskiy and Sudakov~\cite{MPS26} proved the asymptotic form of the Alon--Wei conjecture
in the full range $d=o(n)$.  More precisely, they showed that every
$d$-regular graph on $n$ vertices, with $d=o(n)$, has a spanning subgraph $H$
such that
\[
    m(H,i)=(1+o(1))\frac{n}{d+1}
    \qquad (0\le i\le d).
\]
This settles the asymptotic version of Problem~\ref{prob:alon-wei}.

The purpose of this paper is to prove a different, exact, polynomial-range
version of the Alon--Wei problem. Our result does not merely ask that all degree classes have size
$(1+o(1))n/(d+1)$.  Instead, subject only to the unavoidable total-size and
parity constraints, we prescribe the entire degree-count vector exactly.  Namely,
for every admissible vector $q=(q_0,\ldots,q_d)$ satisfying
\[
    \sum_i q_i=n,\qquad
    \sum_i i q_i\equiv 0\pmod 2,\qquad
    \left|q_i-\frac{n}{d+1}\right|\le 1,
\]
we construct a spanning subgraph $H\subseteq G$ with
\[
    m(H,i)=q_i
    \qquad (0\le i\le d).
\]
This exact conclusion is proved here in the polynomial range
$d\le n^{1/12-\varepsilon}$.

The argument uses the same threshold random subgraph as the initial object, but
the correction step is different.  It uses only single-edge moves, and the main
point is that it is enough to use edge types lying on the two anti-diagonals
\[
    a+b=d-2,
    \qquad
    a+b=d-1.
\]
The lattice cost of these moves is controlled not by a coarse
$\ell^1$-norm of the degree-count error, but by a twofold prefix norm.  In the
threshold model this prefix norm has expectation $O(d^2\sqrt n)$.  Combining
this with a uniform edge-atom estimate of order $d^{-4}$ gives the range
$d\le n^{1/12-\varepsilon}$ for every fixed $\varepsilon>0$.  Although
Problem~\ref{prob:alon-wei} asks for additive error $2$, the argument below
gives additive error $1$ throughout this polynomial range, after the unavoidable
parity constraint is built into the target vector.

\section{Main theorem}
\label{sec:statement}

For $0\le i\le d$, let $e_i$ denote the $i$th standard basis vector of
$\mathbb Z^{d+1}$.  Put
\begin{equation*}
        L_d=
        \left\{
        z\in\mathbb Z^{d+1}:
        \sum_{i=0}^d z_i=0,
        \quad
        \sum_{i=0}^d i z_i\equiv 0\pmod 2
        \right\}.
\end{equation*}

\begin{theorem}
\label{thm:main}
For every $\varepsilon>0$ there exists $n_0=n_0(\varepsilon)$ such that the
following holds.  Let $G$ be a simple $d$-regular graph on $n\ge n_0$ vertices,
where
\begin{equation}
\label{eq:d-range}
        2\le d\le n^{1/12-\varepsilon}.
\end{equation}
Let $q=(q_0,\ldots,q_d)$ be a nonnegative integer vector satisfying
\begin{equation}
\label{eq:target-vector}
        \sum_{i=0}^d q_i=n,
        \qquad
        \sum_{i=0}^d i q_i\equiv 0\pmod 2,
        \qquad
        \left|q_i-\frac{n}{d+1}\right|\le 1
        \quad (0\le i\le d).
\end{equation}
Then there exists a spanning subgraph $H\subseteq G$ such that
\begin{equation}
\label{eq:exact-target}
        m(H,i)=q_i
        \qquad (0\le i\le d).
\end{equation}
Consequently, every such $G$ has a spanning subgraph $H\subseteq G$ satisfying
\[
        \left|m(H,i)-\frac{n}{d+1}\right|\le 1
        \qquad (0\le i\le d).
\]
\end{theorem}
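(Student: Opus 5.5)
We follow the outline in the introduction: start from the threshold random subgraph, then correct its degree-count vector exactly using single-edge moves. Let $x(v)$, $v\in V(G)$, be i.i.d.\ uniform on $[0,1]$, and let $H_0$ consist of the edges $uv$ with $x(u)+x(v)\ge 1$. Write $m(H_0)=(m(H_0,0),\dots,m(H_0,d))$ and $z=q-m(H_0)$. Since $\sum_i i\,m(H,i)=2e(H)$ is always even, the conditions \eqref{eq:target-vector} give $z\in L_d$. The goal is to reach $z=0$ by modifying $H_0$ one edge at a time, where each step changes $m$ by a vector of the form $\pm(e_{a}-e_{a+1}+e_b-e_{b+1})$. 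Such a step is the deletion or addition of an edge $uv$ with $d_H(u)\in\{a,a+1\}$ and $d_H(v)\in\{b,b+1\}$.

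The first step is a lattice lemma. The vectors $f_{a,b}=e_a-e_{a+1}+e_b-e_{b+1}$ with $a+b\in\{d-2,d-1\}$ generate $L_d$. Moreover, each $z\in L_d$ can be written as $\sum c_{a,b}f_{a,b}$ with $\sum|c_{a,b}|$ bounded by a twofold prefix norm. This norm is built from the partial sums $S_k=\sum_{i\le k}z_i$ and their second-order iterates, read from both ends $0$ and $d$, which is why the two anti-diagonals pair low degrees with high degrees. I would prove this by explicit telescoping: pairing index $a$ with $d-1-a$ or $d-2-a$ lets one move mass between $a$ and $a+1$ while compensating at the mirror position. Parity is what makes the adjacent diagonal $a+b=d-1$ necessary.

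The second step is a probabilistic estimate. In the threshold model $\E\, m(H_0,i)\approx n/(d+1)$, up to an error we must control, and the fluctuations give $\E[\text{prefix norm of } z]=O(d^2\sqrt n)$. For the mean, compute $\Prob(d_{H_0}(v)\le k)$ by conditioning on $x(v)=t$: the degree is then $\Bin(d,t)$, integrated over $t$, and this gives exactly $\tfrac{k+1}{d+1}$. The variance comes from covariances between vertices at distance at most $2$, summed over prefixes, with Cauchy--Schwarz giving the $\sqrt n$ factor. Fix an outcome in which this norm is $O(d^2\sqrt n)$.

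The third step, the main obstacle, is showing that each needed move is available many times. For every pair $(a,b)$ on the two anti-diagonals, we need $\Omega(n\,d\cdot d^{-4})$ edges $uv$ of $H_0$, and similarly non-edges of $G\setminus H_0$, whose endpoints have the right degree types. These must also be spread out, so that performing $O(d^2\sqrt n)$ moves leaves such atoms available. I would prove the uniform edge-atom bound $\Prob(d(u)=a,\ d(v)=b,\ uv\in H_0)\gtrsim d^{-4}$ by conditioning on $x(u),x(v)$ near the threshold line and using local limit estimates for the two binomials. I would then use a bounded-differences concentration argument to get $\Omega(nd^{-3})$ such edges with high probability, and greedily choose a family of them that is vertex-disjoint at distance $2$.

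Each move changes only the degree types of its endpoints, and affects atoms only within distance $1$. So the moves can be applied sequentially, provided the budget $O(d^2\sqrt n)\cdot d^{O(1)}$ of destroyed atoms stays below $nd^{-3}$. This is exactly the requirement $d\le n^{1/12-\varepsilon}$. The delicate bookkeeping is ensuring that an earlier move does not invalidate the types assumed for later ones; using vertex-disjoint atoms that are far apart avoids this.
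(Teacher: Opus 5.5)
Your architecture is the paper's: threshold graph $H_0$, anti-diagonal lattice lemma with a second-order prefix norm, an $O(d^2\sqrt n)$ bound on that norm, $d^{-4}$ edge atoms giving supply $nd^{-3}$, and greedy disjoint selection. Your McDiarmid route to the supply bound also works, since one label affects at most $d(d+1)$ indicators and the lower-tail exponent is of order $nd^{-10}\ge n^{1/6}$.

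The problem is that your exponent bookkeeping, as set up, does not reach $d\le n^{1/12-\varepsilon}$. Most concretely, you require the selected atoms to be vertex-disjoint at distance $2$. Then each selected edge $uv$ blocks every candidate with an endpoint in $N[u]\cup N[v]$, which is order $d^2$ candidates of a fixed type rather than order $d$. With $R=\Theta(d^2\sqrt n)$ moves this blocks order $d^4\sqrt n$ atoms. Beating the supply $nd^{-3}$ then needs $d^7\ll\sqrt n$, i.e.\ only $d\le n^{1/14-\varepsilon}$. Your claim that the budget $O(d^2\sqrt n)\cdot d^{O(1)}$ is ``exactly'' the $n^{1/12}$ condition holds only if $d^{O(1)}=d$.

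The missing point, already implicit in your remark that a move only affects atoms sharing an endpoint, is that plain vertex-disjointness suffices. Evaluate all types in $H_0$ and apply all toggles at once. The type of $e=uv$ depends only on $D_e(u),D_e(v)$, which change only through selected edges incident with $u$ or $v$. So vertex-disjoint moves contribute exactly $\sum\Lambda_{a,b}\beta_{a,b}=z$, and each selected edge blocks at most $2d-1$ candidates of a type. The condition $(2d+1)R=O(d^3\sqrt n)\ll nd^{-3}$ is then precisely $d\le n^{1/12-\varepsilon}$.

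The same factor of $d$ is at risk in your variance step. Write the prefix functionals as $P_j(M)=-\sum_v\phi_j(d_{H_0}(v))$, with $\phi_j$ $1$-Lipschitz and of range $O(d)$. If you sum covariances over the $O(d^2)$ partners at distance at most $2$ and bound each by $\Var\phi_j(d_{H_0}(v))=O(d^2)$, you get only $\Var P_j(M)=O(nd^4)$. That gives $\E\sum_j|P_j(z)|=O(d^3\sqrt n)$, which again shrinks the range. The paper gets $\Var P_j(M)\le nd^2$ from the bounded-differences Efron--Stein inequality. Resampling $X_u$ moves $d_{H_0}(u)$ by at most $d$ and each neighbour's degree by at most $1$, so $P_j(M)$ moves by at most $2d$. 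A covariance argument can match this only with the refined bound $|\operatorname{Cov}|=O(d\,\mathbf 1[u\sim w]+\mathrm{codeg}(u,w))$.

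For the edge atom there are two further issues. First, $D_e(u)$ and $D_e(v)$ are not independent binomials given $X_u,X_v$ when $u,v$ have common neighbours. Second, the Gaussian local limit theorem fails in bounded-variance boundary regimes such as $a=O(1)$. The paper conditions on the hypergeometric number of common neighbours already joined to $u$; its mode bound $1/(s+1)$ is where $d^{-4}$ rather than $d^{-3}$ comes from. It uses Poisson approximation in the degenerate regimes.

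Finally, both anti-diagonals are needed not because of parity but because $\beta_{i,d-1-i}-\beta_{i,d-2-i}=f_{d-1-i}-f_{d-2-i}$ supplies the adjacent differences. Parity enters through the loop generator $2f_m$ on the diagonal with even sum. Transporting it to $2f_0$ costs $O(d)$, which is why $S(z)$ carries weight $d$ in the norm.
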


\begin{remark}
\label{rem:mps-comparison}
Theorem~\ref{thm:main} was obtained before the public appearance of the
preprint of Montgomery, Pokrovskiy and Sudakov~\cite{MPS26}.  Their theorem and
Theorem~\ref{thm:main} are complementary rather than comparable.  The former
treats the asymptotic problem in the full range $d=o(n)$, giving degree-class
sizes $(1+o(1))n/(d+1)$.  The latter works only in the polynomial range
$d\le n^{1/12-\varepsilon}$, but realizes every admissible balanced
degree-count vector exactly, subject only to the total-size and parity
constraints in~\eqref{eq:target-vector}.
\end{remark}

\subsection{Proof overview}
Before entering the technical estimates, we describe the mechanism of the proof.
The proof uses the probabilistic method only to find a good starting subgraph;
after such a starting subgraph has been found, the rest of the argument is
deterministic.

Choose independent random variables $X_v\sim {\rm Unif}[0,1]$, one for each
vertex $v\in V(G)$, and define the threshold subgraph $H_0\subseteq G$ by
\[
    uv\in E(H_0)
    \quad\Longleftrightarrow\quad
    X_u+X_v\ge 1 .
\]
Let
\[
    M_i=m(H_0,i),\qquad M=(M_0,\ldots,M_d).
\]
The target vector $q$ in Theorem~\ref{thm:main} is fixed in advance, and the
discrepancy to be corrected is
\[
    z=q-M.
\]
The choice of $H_0$ is useful because, for every vertex $v$, the degree of $v$
in $H_0$ is uniformly distributed on $\{0,\ldots,d\}$. Thus, $M$ is already
centered around the desired balanced vector.

The proof then has two separate tasks.

The first task is algebraic.  Suppose that an absent edge whose endpoints
currently have degrees $a$ and $b$ is added.  Then the degree-count vector
changes by
\[
    \beta_{a,b}=-e_a-e_b+e_{a+1}+e_{b+1}.
\]
Deleting a present edge whose endpoints with degree $a$ and $b$ after deleting gives the opposite change.  Therefore, in order to
correct the error $z=q-M$, we want to write $z$ as an integer combination of
such vectors $\beta_{a,b}$.

The proof does not use all possible pairs $(a,b)$.  It uses only the two
anti-diagonals
\[
    a+b=d-2
    \qquad\text{and}\qquad
    a+b=d-1.
\]
The reason for this restriction is probabilistic: for these pairs, the natural
label point
\[
    \left(\frac{a}{d-1},\frac{b}{d-1}\right)
\]
lies close to the threshold line $x+y=1$. Hence one can find small rectangles
near this point on either side of the line $x+y=1$, giving both non-edges that
can be added and edges that can be deleted.  This is the bridge between the
lattice part and the random edge-supply part.

The refined lattice lemma, Lemma~\ref{lem:refined-lattice}, proves that every
valid discrepancy vector $z\in L_d$ has a representation
\[
    z=
    \sum_{\substack{0\le a,b\le d-1\\ a+b\in\{d-2,d-1\}}}
    \lambda_{a,b}\beta_{a,b},
\]
with controlled cost
\[
    \sum_{a,b}|\lambda_{a,b}|
    \le
    C_L\left(\sum_{j=1}^{d-1}|P_j(z)|+d|S(z)|\right).
\]
Thus the lattice lemma produces a ``shopping list'': if
$\lambda_{a,b}>0$, we need to add that many edges of type $(a,b)$; if
$\lambda_{a,b}<0$, we need to delete that many edges of type $(a,b)$.

The second task is probabilistic.  For every allowed signed type, we must show
that the random threshold graph supplies many candidate edges of that type.
The key local estimate is Lemma~\ref{lem:edge-atom}.  It says that, for every
fixed edge $e\in E(G)$ and every allowed signed type,
\[
    \Prob(e\text{ is a candidate of that signed type})
    \ge c_{\rm at}d^{-4}.
\]
Since $G$ has $nd/2$ edges, the expected number of candidates of one fixed
signed type is therefore of order
\[
    nd\cdot d^{-4}=nd^{-3}.
\]
Lemma~\ref{lem:supply} turns this local estimate into a simultaneous supply
statement: with probability tending to one, every signed anti-diagonal type has
at least
\[
    c_Snd^{-3}
\]
candidate edges.

It remains to compare supply with demand.  Lemma~\ref{lem:expected-refined}
shows that the expected refined size of the random discrepancy is at most
\[
    C(d^2\sqrt n+d^3).
\]
In the range $d\le n^{1/12-\varepsilon}$, this is $O(d^2\sqrt n)$.  Hence, with
positive probability, the same threshold graph $H_0$ simultaneously satisfies

\[
    \text{every signed type has supply at least } c_Snd^{-3},
\]
and
\[
    \text{the shopping list has total size at most } C d^2\sqrt n.
\]
We then fix such a realization of $H_0$.

The final step is deterministic.  We choose the required candidate edges
greedily, insisting that all selected edges are vertex-disjoint.  If
\[
    R\le C d^2\sqrt n
\]
edges have to be selected in total, then previously selected edges can make at
most $O(dR)=O(d^3\sqrt n)$ candidates of any fixed signed type unavailable.
But each signed type has supply at least $c_Snd^{-3}$, and
\[
    nd^{-3}\gg d^3\sqrt n
\]
is exactly the inequality guaranteed by
\[
    d\le n^{1/12-\varepsilon}.
\]
Thus the greedy procedure can choose all required candidates vertex-disjointly.

After all choices are made, we add the selected positive candidates and delete
the selected negative candidates.  Because the selected edges are
vertex-disjoint, their degree-count effects do not interfere.  The total change
in the degree-count vector is exactly $z=q-M$, so the resulting subgraph has
degree-count vector $q$.


\subsection{Organization of the paper}
Section~\ref{sec:local-prob} proves the
local probability estimates, including the edge-atom lower bound.
Section~\ref{sec:lattice} proves the anti-diagonal lattice lemma.
Section~\ref{sec:random-discrepancy} bounds the refined discrepancy of the
threshold graph.  Section~\ref{sec:completion} combines these ingredients with
the simultaneous supply estimate and the greedy vertex-disjoint selection.

\section{Local probabilistic estimates}
\label{sec:local-prob}

\subsection{One-dimensional estimates}
\label{subsec:one-dimensional}

The estimates in this subsection are used only in the proof of the
edge-atom estimate, Lemma~\ref{lem:edge-atom}.  Their purpose is to give
uniform lower bounds for exact point probabilities of binomial-type random
variables, including boundary cases where the success probability is as small
as order $1/d$.  Readers interested first in the global structure of the proof
may skip this subsection on a first reading and return to it when reading
Lemma~\ref{lem:edge-atom}.

We begin with two standard tools for sums of independent Bernoulli random
variables.  The first is a uniform lattice local approximation for
Poisson-binomial variables; the second is Le Cam's total-variation form of the
Poisson approximation. We quote these estimates in precisely the form needed
below. Here $\mathcal L(X)$ denotes the law of $X$, and
$d_{\mathrm{TV}}(\mu,\nu)=\sup_A|\mu(A)-\nu(A)|$.

We shall use the following Bernoulli local approximation and Poisson-coupling estimates.
\begin{lemma}
\label{lem:bernoulli-tools}
Let $\xi_1,\ldots,\xi_m$ be independent Bernoulli random variables with
parameters $p_1,\ldots,p_m$, and put
\[
        T=\sum_{i=1}^m \xi_i,
        \qquad
        \mu=\E T,
        \qquad
        \sigma^2=\Var(T)=\sum_{i=1}^m p_i(1-p_i).
\]
There is an absolute constant $C$ such that, whenever $\sigma\ge1$,
\begin{equation}
\label{eq:bernoulli-local-approximation}
        \sup_{k\in\mathbb Z}
        \left|
        \Prob(T=k)
        -\frac{1}{\sqrt{2\pi}\,\sigma}
        \exp\!\left(-\frac{(k-\mu)^2}{2\sigma^2}\right)
        \right|
        \le \frac{C}{\sigma^2}.
\end{equation}

Moreover, let $\zeta_i$, $1\le i\le m$, be independent Bernoulli random
variables with respective parameters $r_i$.  Put
\[
        Z=\sum_{i=1}^m\zeta_i,
        \qquad
        \lambda=\sum_{i=1}^m r_i.
\]
Then
\begin{equation}
\label{eq:le-cam-coupling}
        d_{\mathrm{TV}}\!\left(
        \mathcal L(Z),\operatorname{Poisson}(\lambda)
        \right)
        \le \sum_{i=1}^m r_i^2.
\end{equation}
\end{lemma}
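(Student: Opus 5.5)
These two estimates are quoted from the literature, so the plan is to verify that the quoted forms follow from standard results rather than to prove them from scratch. The two parts are treated separately.

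For the local approximation \eqref{eq:bernoulli-local-approximation}, I would use the classical Fourier-analytic local limit theorem for sums of independent bounded integer variables. The starting point is the inversion formula
\[
\Prob(T=k)=\frac{1}{2\pi}\int_{-\pi}^{\pi}\prod_{i=1}^m\bigl(1-p_i+p_ie^{it}\bigr)e^{-ikt}\,dt .
\]
Each factor satisfies $|1-p_i+p_ie^{it}|^2=1-2p_i(1-p_i)(1-\cos t)$. Since $1-\cos t\ge 2t^2/\pi^2$ on $[-\pi,\pi]$, this gives
\[
\Bigl|\prod_i\bigl(1-p_i+p_ie^{it}\bigr)\Bigr|\le \exp\!\bigl(-c\,\sigma^2t^2\bigr)
\]
uniformly for $|t|\le\pi$.

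I would then split the integral into two ranges. On $|t|\le \sigma^{-1}\cdot c'\sqrt{\log\sigma}$, or more simply $|t|\le\delta$, I would expand the logarithm of each factor to third order after centring by $e^{-i\mu t}$. The quadratic term gives $-\sigma^2t^2/2$, and the cubic remainder is bounded by $C\sum p_i(1-p_i)|t|^3=C\sigma^2|t|^3$. The integrand is then
\[
e^{-\sigma^2t^2/2}\bigl(1+O(\sigma^2|t|^3)\bigr),
\]
and integrating the error term against the Gaussian gives $O(\sigma^2\cdot\sigma^{-4})=O(\sigma^{-2})$. Replacing the truncated Gaussian integral by the full one costs an exponentially small amount. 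On the complementary range, the bound $\exp(-c\sigma^2t^2)$ again contributes only $O(\sigma^{-2})$. Finally, the full Gaussian integral equals the density term $\frac{1}{\sqrt{2\pi}\sigma}e^{-(k-\mu)^2/2\sigma^2}$.

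The main obstacle is making this work uniformly in the $p_i$, with no lower bound on $p_i$ and only the hypothesis $\sigma\ge1$. The bounds above depend only on $\sigma^2=\sum p_i(1-p_i)$, so they are uniform. A cleaner alternative would be to cite directly the Edgeworth-type local limit theorem for Poisson-binomial laws, such as Petrov's theorem for lattice variables or the explicit bounds of Deheuvels, Puri and Ralescu. That theorem states exactly this $O(1/\sigma^2)$ bound, the third-cumulant correction being odd and of size $O(1/\sigma^2)$.

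The second part, \eqref{eq:le-cam-coupling}, is Le Cam's inequality, proved by coupling. For each $i$, I would couple $\zeta_i$ with $Y_i\sim\operatorname{Poisson}(r_i)$ so that $\Prob(\zeta_i\ne Y_i)\le r_i^2$. The coupling sets $\zeta_i=0$ on the event $Y_i=0$, and takes the mass $1-r_i$ for $\zeta_i=0$ from $\{Y_i=0\}$, which has probability $e^{-r_i}\ge 1-r_i$. The mismatch probability is then
\[
(e^{-r_i}-(1-r_i))+\bigl(r_i-r_ie^{-r_i}\bigr)=r_i(1-e^{-r_i})\le r_i^2 .
\]
With these couplings independent across $i$, the sum $\sum Y_i$ is $\operatorname{Poisson}(\lambda)$, and a union bound gives $d_{\mathrm{TV}}\le\Prob(Z\ne\sum Y_i)\le\sum r_i^2$.
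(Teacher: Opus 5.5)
Your proposal is correct. For the Le Cam bound \eqref{eq:le-cam-coupling} it follows the paper's maximal-coupling argument; for the local approximation \eqref{eq:bernoulli-local-approximation} it takes a genuinely different route.

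\textbf{The local approximation.} The paper does not prove this estimate itself. It deletes deterministic summands so that all $p_i\in(0,1)$. It then quotes the explicit Auld--Neammanee bound $3.23\sigma^{-2}+1.35\sigma^{-3}+0.25\sigma^{-4}$, which is stated only for $k\in\{0,\ldots,m\}$. Finally it treats integers outside the support separately, using $|k-\mu|\ge\sigma^2$ there (from $\sigma^2\le\mu$ and $\sigma^2\le m-\mu$) to make the Gaussian term $O(\sigma^{-2})$.

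Your Fourier argument is self-contained and needs neither reduction. The inversion formula is valid for every $k\in\mathbb Z$ and returns $0$ off the support. All your bounds depend on the $p_i$ only through $\sigma^2$. For a single summand, $\psi(t)=\log(1-p+pe^{it})-ipt$ has $\psi'(t)=ip(1-p)(e^{it}-1)/(1-p+pe^{it})$, so every derivative carries the factor $p(1-p)$. The paper's route gives brevity and explicit constants; yours gives independence from the cited theorem.

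One step needs tightening. With an absolute cutoff $|t|\le\delta$, the integrand is not $e^{-\sigma^2t^2/2}\bigl(1+O(\sigma^2|t|^3)\bigr)$ once $\sigma^2|t|^3$ is large. Instead, take $\delta$ small enough that the cubic remainder $w$ satisfies $|w|\le\sigma^2t^2/4$, and use $|e^w-1|\le|w|e^{|w|}$. This bounds the error by $C\sigma^2|t|^3e^{-\sigma^2t^2/4}$, whose integral is still $O(\sigma^{-2})$. Alternatively, use your $\sqrt{\log\sigma}/\sigma$ cutoff, on which $\sigma^2|t|^3$ stays bounded.

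\textbf{The coupling.} Your bookkeeping has a slip. Read literally, ``$\zeta_i=0$ on the event $Y_i=0$'' would give $\Prob(\zeta_i=0)\ge e^{-r_i}>1-r_i$. What you mean is that $\zeta_i=0$ on a sub-event of $\{Y_i=0\}$ of probability $1-r_i$, and $\zeta_i=1$ elsewhere. The mismatch event is then $\{\zeta_i=1,\,Y_i\ne1\}$, with probability $(e^{-r_i}-(1-r_i))+\Prob(Y_i\ge2)=r_i-r_ie^{-r_i}$. Your displayed second term $r_i-r_ie^{-r_i}$ is already the whole mismatch probability. So your two-term sum double counts and does not equal $r_i(1-e^{-r_i})$. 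The value $r_i(1-e^{-r_i})\le r_i^2$ that you conclude with is nevertheless correct, and the rest of the argument goes through.
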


\begin{proof}
We first prove \eqref{eq:bernoulli-local-approximation}.  Deterministic
summands may be deleted: a summand with parameter $0$ has no effect, while a
summand with parameter $1$ shifts both $T$ and $\mu$ by one and leaves
$\sigma^2$ unchanged.  Thus, after an integer shift, we may assume that all
remaining parameters lie in $(0,1)$.

For $k\in\{0,\ldots,m\}$, \cite[Theorem~1.2]{AuldNeammanee} gives
\[
        \left|
        \Prob(T=k)
        -\frac{1}{\sqrt{2\pi}\,\sigma}
        \exp\!\left(-\frac{(k-\mu)^2}{2\sigma^2}\right)
        \right|
        \le
        \frac{3.23}{\sigma^2}
        +\frac{1.35}{\sigma^3}
        +\frac{0.25}{\sigma^4}.
\]
Since $\sigma\ge1$, this is at most $C\sigma^{-2}$.

It remains to consider integers outside the support.  After the above deletion
of deterministic summands, the support is contained in $\{0,\ldots,m\}$, and
\[
        \sigma^2=\sum_i p_i(1-p_i)\le \sum_i p_i=\mu,
        \qquad
        \sigma^2\le \sum_i(1-p_i)=m-\mu .
\]
If $k<0$, then $|k-\mu|\ge \mu\ge\sigma^2$.  If $k>m$, then, since
$k$ is an integer, $|k-\mu|\ge m+1-\mu\ge m-\mu\ge\sigma^2$.  Therefore
\[
        \frac{1}{\sqrt{2\pi}\,\sigma}
        \exp\!\left(-\frac{(k-\mu)^2}{2\sigma^2}\right)
        \le
        \frac{1}{\sqrt{2\pi}\,\sigma}e^{-\sigma^2/2}
        \le
        C\sigma^{-2},
        \qquad \sigma\ge1.
\]
This proves \eqref{eq:bernoulli-local-approximation}.

We next prove \eqref{eq:le-cam-coupling}. This follows from \cite[Proposition~1]{LeCam} after translating his norm
convention. Indeed,
Le Cam uses
\[
        \|\mu\|=\sup_{|f|\le1}\left|\int f\,d\mu\right|,
\]
which equals twice our convention
$d_{\mathrm{TV}}(\mu,\nu)=\sup_A|\mu(A)-\nu(A)|$ for differences of
probability measures. \cite[Proposition~1]{LeCam} gives, for a sum of independent
Bernoulli variables,
\[
        \|\mathcal L(Z)-\operatorname{Poisson}(\lambda)\|
        \le 2\sum_i r_i^2 .
\]
Thus \eqref{eq:le-cam-coupling} follows.  We also recall the following
short coupling proof, which fixes the normalization.

Let $P_r$ denote a Poisson random variable of mean $r$.  For $0\le r\le1$,
a direct calculation gives
\[
        d_{\mathrm{TV}}\bigl(\Ber(r),\mathcal L(P_r)\bigr)
        =r(1-e^{-r})\le r^2 .
\]
Indeed, the only positive part of the signed measure
$\Ber(r)-\mathcal L(P_r)$ is at the point $1$.

For each $i$, choose a maximal coupling
$(\widetilde\zeta_i,P_i)$ of $\Ber(r_i)$ and $\operatorname{Poisson}(r_i)$,
and take these pairs independent over $i$.  Then
$\sum_i\widetilde\zeta_i$ has the same law as $Z$, while
$\sum_iP_i$ has the Poisson distribution of mean $\lambda$.  Hence, by the
coupling inequality,
\[
        d_{\mathrm{TV}}\!\left(
        \mathcal L(Z),\operatorname{Poisson}(\lambda)
        \right)
        \le
        \Prob\!\left(\sum_i\widetilde\zeta_i\ne\sum_iP_i\right)
        \le
        \sum_i\Prob(\widetilde\zeta_i\ne P_i)
        \le
        \sum_i r_i^2 .
\]
This proves \eqref{eq:le-cam-coupling}.
\end{proof}

We next state the one-dimensional estimates in the precise form needed for the
edge-atom argument.  The extra geometric hypotheses in part~2 are essential in
the bounded-variance boundary regimes.

\begin{lemma}
\label{lem:local-estimates}
Fix constants $\delta>0$, $C_0>0$ and $K>0$.  There are constants
$c_B,c_T>0$, depending only on $\delta,C_0,K$, such that the following hold for
all integers $s\ge 1$.

\begin{enumerate}
\item Let $B\sim\Bin(s,p)$, where
\[
        \frac{\delta}{s}\le p\le 1-\frac{\delta}{s}.
\]
If $k\in\{0,1,\ldots,s\}$ and $|k-sp|\le K$, then
\begin{equation}
\label{eq:bin-local}
        \Prob(B=k)\ge c_B(s+1)^{-1/2}.
\end{equation}

\item Let $0\le a\le b\le s$ and suppose
\[
        a+b\in\{s-1,s\}.
\]
Let $x,y\in[0,1]$ satisfy
\begin{equation}
\label{eq:threshold-local-geometry}
        x\le y,
        \qquad
        |sx-a|+|sy-b|\le C_0,
        \qquad
        x,\ y-x,\ 1-y\ge \frac{\delta}{s}.
\end{equation}
Let $0\le t\le s$ be an integer, and let $c_*$ be an integer such that
\begin{equation*}
        0\le c_*\le \min\{a,t\},
        \qquad
        \left|c_*-\frac{at}{s}\right|\le K.
\end{equation*}
Put
\[
         p_{xy}=\frac{y-x}{1-x},
\]
and let
\[
        W=c_*+\Bin(t-c_*,p_{xy})+\Bin(s-t,y),
\]
where the two binomial random variables are independent.  Then
\begin{equation}
\label{eq:threshold-conv-local}
        \Prob(W=b)\ge c_T(s+1)^{-1/2}.
\end{equation}

\item Let $0\le a,t\le s$ be integers. Let $Y$ have the hypergeometric
distribution obtained by drawing $a$ points from a set of size $s$, of which
$t$ are marked. Then there is an integer
$c_*\in\supp(Y)$ such that
\begin{equation}
\label{eq:hyp-local}
        \Prob(Y=c_*)\ge \frac{1}{s+1},
        \qquad
        \left|c_*-\frac{at}{s}\right|\le 1.
\end{equation}
\end{enumerate}
\end{lemma}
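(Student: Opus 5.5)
Part~3 is elementary and I will do it first. The support of $Y$ is an integer interval $I=[\max(0,a+t-s),\min(a,t)]$, and $\E Y=at/s$. The hypergeometric law is unimodal, because the ratio $\Prob(Y=k+1)/\Prob(Y=k)$ is decreasing in $k$. Its mode lies within distance $1$ of $(a+1)(t+1)/(s+2)$, and hence within distance $1$ of $at/s$. Rather than rely on delicate mode bounds, I will argue more robustly. Let $c_*$ be the mode, so that $\Prob(Y=c_*)\ge 1/|I|\ge 1/(s+1)$. To locate the mode, I will use the ratio criterion $(a-k)(t-k)\gtrless (k+1)(s-a-t+k+1)$. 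This shows that the mode is $\lfloor (a+1)(t+1)/(s+2)\rfloor$, or one of two adjacent values. A direct computation gives $|(a+1)(t+1)/(s+2)-at/s|\le 1$, since the difference equals $\bigl(s(a+t+1)-2at\bigr)/(s(s+2))$ and $0\le a,t\le s$. If the rounding pushes the distance over $1$, I will fall back on the adjacent support point on the side of $at/s$. That point has probability at least that of some point in a collection whose maxima are still at least $1/(s+1)$, by unimodality and the fact that both neighbours of the real mode lie in $[\lfloor m\rfloor,\lceil m\rceil]$.

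For Part~1, I split according to $\sigma^2=sp(1-p)$. If $\sigma^2\ge \sigma_0^2$ for a large constant $\sigma_0$, then \eqref{eq:bernoulli-local-approximation} gives $\Prob(B=k)\ge (2\pi)^{-1/2}\sigma^{-1}e^{-K^2/2\sigma^2}-C\sigma^{-2}\gtrsim \sigma^{-1}\ge (s+1)^{-1/2}$, since $\sigma^2\le s/4$. If $\sigma^2<\sigma_0^2$, then either $sp$ or $s(1-p)$ is bounded, say $\lambda=sp\in[\delta,2\sigma_0^2]$ (by symmetry $k\mapsto s-k$). Le Cam's bound \eqref{eq:le-cam-coupling} gives total variation at most $sp^2=\lambda^2/s$. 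When $s$ is large, $\Prob(B=k)\ge e^{-\lambda}\lambda^k/k!-\lambda^2/s$. Here $k\le \lambda+K$ is bounded and $\lambda\ge\delta$, so this is bounded below by a constant, which is at least $c(s+1)^{-1/2}$. For bounded $s$, I will use the explicit formula $\binom sk p^k(1-p)^{s-k}\ge \min(\delta/s,1-\delta/s)^s$, which is a positive constant depending only on $\delta$ and on the bound on $s$.

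For Part~2, the mean of $W$ is $c_*+(t-c_*)p_{xy}+(s-t)y$, and the key computation is that it lies within $O(C_0+K)$ of $b$. I will show this by substituting $c_*\approx at/s$, $x\approx a/s$ and $y\approx b/s$ and using $a+b\in\{s-1,s\}$, which gives $1-x\approx b/s$ up to $O(1/s)$ in the numerator. Concretely, $(t-at/s)(y-x)/(1-x)+(s-t)y$ with $1-x\approx y$ simplifies to $t(y-x)+(s-t)y+O(1)=sy-tx+O(1)$, and adding $c_*\approx tx$ gives $sy\approx b$. The error terms need care because $1-x$ may be as small as $\delta/s$; this is exactly where the hypothesis $1-y\ge\delta/s$ and the bound on $y-x$ enter. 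Next I will let $\sigma^2$ be the total variance of the two binomials. If $\sigma^2$ is large, I will apply \eqref{eq:bernoulli-local-approximation} to the Poisson-binomial sum directly, getting $\gtrsim\sigma^{-1}\ge(s+1)^{-1/2}$. If $\sigma^2$ is bounded, each binomial is close in law to either a Poisson variable or a shifted reflected Poisson variable, by Le Cam applied to successes or to failures. The boundary assumptions $x,\,y-x,\,1-y\ge\delta/s$ guarantee that the relevant Poisson parameters are bounded below whenever they matter, so that $b$ is hit with constant probability.

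The main obstacle is this bounded-variance case of Part~2. There one must check that the target $b$ is actually reachable with constant probability, and not merely close to the mean: the mean is within $O(1)$ of $b$, but the support might fail to contain $b$. I would first try a case split on which of $t-c_*$, $s-t$ is large and which of $p_{xy}$, $y$ is near $0$ or $1$. In each case I would reduce to a single Poisson or Poisson-difference point probability at an integer within distance $O(1)$ of its mean, and conclude by continuity and compactness in the bounded parameters.
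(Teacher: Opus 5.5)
Part~1 is fine: it is a direct version of the paper's contradiction argument. Your mean computation and large-variance step in Part~2 also match the paper.

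The genuine gap is the bounded-variance case of Part~2. This is the real content of the lemma, and you leave it as a plan. The reason you give for why that plan works is also insufficient. The conditions $x,\ y-x,\ 1-y\ge\delta/s$, together with $|\E W-b|=O(1)$ and bounded variance, do \emph{not} keep the relevant Poisson parameters away from $0$.

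Here is a counterexample. Take $a=b=s-2$, $x=(s-7/4)/s$, $y=(s-5/4)/s$, $t=s-1$ and $c_*=a$. Every hypothesis of Part~2 holds with $\delta=1/2$ and $C_0=K=2$, except $a+b\in\{s-1,s\}$. Yet $W=s-2+\Ber(2/7)+\Ber(y)$, so $\Prob(W=b)=\frac57(1-y)=\frac{25}{28s}$. Hitting $b$ requires a failure of $\Bin(s-t,y)$. The mean number of such failures, $(s-t)(1-y)$, tends to $0$ even though $1-y\ge\delta/s$, because $s-t$ is bounded. This is a mixed regime of the kind your case split (including your ``Poisson-difference'' option) would admit. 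So the anti-diagonal hypothesis must be used in the bounded-variance regime, whereas you use it only to locate the mean. By contrast, your worry that $b$ may lie outside the support is a non-issue: $p_{xy},y\in(0,1)$, so the support is $\{c_*,\dots,s\}\ni b$.

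The missing idea is the paper's reduction. Since $s(x+y)=s+O(1)$, pass to a subsequence with $a/s\to\alpha\le1/2$ and $t/s\to\tau$. The variance divided by $s$ then tends to $\tau(1-\alpha)p_0(1-p_0)+(1-\tau)\alpha(1-\alpha)$, where $p_0=(1-2\alpha)/(1-\alpha)$. This vanishes only if $\alpha=0$, or if $\alpha=1/2$ and $\tau=1$; mixed regimes never occur.
\begin{itemize}
\item If $\alpha=0$, both binomials have rare failures. Then $s-W$ is asymptotically Poisson with mean $\lambda_s\ge(1-y)(s-c_*)\ge\delta/2$, and the target $s-b$ is bounded. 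The bound $s-c_*\ge s-a\ge s/2$ comes from $a\le b$ and $a+b\le s$; in the example above, $s-c_*=2$.
\item If $\alpha=1/2$ and $\tau=1$, then $s-t=m$ is bounded and $\Bin(m,y)\to\Bin(m,1/2)$. This limit is neither Poisson nor reflected Poisson, contrary to your dichotomy. Meanwhile $\Bin(t-c_*,p_{xy})$ is asymptotically Poisson with mean bounded below, because $y-x\ge\delta/s$ and $t-c_*\sim s/2$, and the target $b-c_*$ is bounded.
\end{itemize}

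A smaller slip is in Part~3. The step ``within $1$ of $(a+1)(t+1)/(s+2)$, hence within $1$ of $at/s$'' does not follow, and the fallback you describe is not an argument. Use instead the one-sided bound $0\le\frac{(a+1)(t+1)}{s+2}-\frac{at}{s}<1$; your numerator equals $a(s-t)+t(s-a)+s$, which makes this immediate. It gives $|\lfloor (a+1)(t+1)/(s+2)\rfloor-at/s|<1$ at once. You still need to check that this floor lies in the support.
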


\begin{proof}
We first prove the binomial estimate.  Suppose, to the contrary, that there is
a sequence of admissible parameters for which
\[
        \sqrt{s+1}\,\Prob(B=k)\longrightarrow0.
\]
For bounded $s$, compactness of the admissible interval for $p$ and positivity
of every binomial mass on its support give a uniform positive lower bound, so
necessarily $s\to\infty$.  Put $\sigma^2=sp(1-p)$.  After passing to a
subsequence, either $\sigma^2\to\infty$ or $\sigma^2=O(1)$.

In the first case, Lemma~\ref{lem:bernoulli-tools} and $|k-sp|\le K$ give
\[
        \Prob(B=k)
        \ge
        \frac{1}{\sqrt{2\pi}\,\sigma}
        \exp\!\left(-\frac{K^2}{2\sigma^2}\right)
        -\frac{C}{\sigma^2}
        \ge \frac{c}{\sigma}
        \ge c s^{-1/2}
\]
for all sufficiently large $s$, a contradiction.

Suppose now that $\sigma^2=O(1)$.  Passing to a further subsequence, assume
first that $p\le1/2$.  Then
\[
        \delta\le sp\le 2\sigma^2=O(1).
\]
Thus $k$ ranges over a fixed finite set.  Passing to another subsequence, we may
assume that $k$ is fixed and $sp\to\lambda\in(0,\infty)$.  Since $p\to0$,
\eqref{eq:le-cam-coupling} gives
\[
        d_{\mathrm{TV}}\bigl(\mathcal L(B),
        \operatorname{Poisson}(sp)\bigr)
        \le sp^2\longrightarrow0.
\]
It follows that
\[
        \Prob(B=k)\longrightarrow e^{-\lambda}\frac{\lambda^k}{k!}>0,
\]
again a contradiction.  If $p\ge1/2$, apply the same argument to $s-B$, whose
success parameter is $1-p$ and whose target is $s-k$.  This proves
\eqref{eq:bin-local}.

We turn to the threshold-convolution estimate.  The hypotheses imply
$0<p_{xy}<1$.  Since also $0<y<1$, the two binomial variables in $W$ have full
interval support, and therefore
\[
        \supp(W)=\{c_*,c_*+1,\ldots,s\}.
\]
As $c_*\le a\le b\le s$, the target $b$ lies in this support.  Put
\[
        \bar c=\frac{at}{s}.
\]
Using $1-p_{xy}=(1-y)/(1-x)$ and
$p_{xy}-y=-x(1-y)/(1-x)$, we obtain
\begin{align*}
        p_{xy}t+(1-p_{xy})\bar c+(s-t)y-sy
        &=
        \frac{t(1-y)}{1-x}\left(\frac{a}{s}-x\right).
\end{align*}
The right-hand side is $O(1)$ by
\eqref{eq:threshold-local-geometry}.  Replacing $\bar c$ by $c_*$ changes the
left-hand side by at most $K$, and $|sy-b|\le C_0$.  Hence
\begin{equation}
\label{eq:threshold-local-target-near-mean}
        |\E W-b|\le C,
\end{equation}
where $C$ depends only on $C_0$ and $K$.

Suppose that \eqref{eq:threshold-conv-local} fails.  We may then choose a
sequence of admissible parameters such that
\begin{equation}
\label{eq:threshold-countersequence}
        \sqrt{s+1}\,\Prob(W=b)\longrightarrow0.
\end{equation}
For bounded $s$, there are only finitely many choices of the integer parameters.
For each such choice the admissible set of $(x,y)$ is compact,
$p_{xy},y\in(0,1)$, and $b$ lies in the support, so the relevant point
probability has a positive minimum. Therefore $s\to\infty$ along the countersequence.

Let
\[
        \sigma_s^2=
        (t-c_*)p_{xy}(1-p_{xy})+(s-t)y(1-y)
\]
be the variance of $W$. If $\sigma_s^2$ is unbounded, pass to a subsequence on
which $\sigma_s^2\to\infty$.  Lemma~\ref{lem:bernoulli-tools} and
\eqref{eq:threshold-local-target-near-mean} give
\[
        \Prob(W=b)
        \ge
        \frac{1}{\sqrt{2\pi}\,\sigma_s}
        \exp\!\left(-\frac{C^2}{2\sigma_s^2}\right)
        -\frac{C'}{\sigma_s^2}
        \ge \frac{c}{\sigma_s}
        \ge c s^{-1/2},
\]
because $\sigma_s^2\le s/4$.  This contradicts
\eqref{eq:threshold-countersequence}.  We may therefore pass to a subsequence
on which
\begin{equation}
\label{eq:threshold-bounded-var}
        \sigma_s^2=O(1).
\end{equation}

Pass to a further subsequence so that $a/s\to\alpha$ and $t/s\to\tau$.  Since
$a+b\in\{s-1,s\}$ and \eqref{eq:threshold-local-geometry} holds,
\[
        x\to\alpha,
        \qquad
        y\to1-\alpha,
        \qquad
        \frac{c_*}{s}\to\alpha\tau,
        \qquad
        0\le\alpha\le\frac12.
\]
Moreover
\[
        p_{xy}\to p_0:=\frac{1-2\alpha}{1-\alpha}.
\]
Dividing the variance by $s$ gives the limiting variance density
\[
        \tau(1-\alpha)p_0(1-p_0)
        +(1-\tau)\alpha(1-\alpha).
\]
By \eqref{eq:threshold-bounded-var}, this expression is zero.  Hence either
\begin{equation}
\label{eq:threshold-boundary-one}
        \alpha=0,
\end{equation}
or
\begin{equation}
\label{eq:threshold-boundary-two}
        \alpha=\frac12
        \quad\text{and}\quad
        \tau=1.
\end{equation}

First consider \eqref{eq:threshold-boundary-one}.  Put
\[
        \eta_s=s-(a+b)\in\{0,1\}.
\]
Then
\begin{equation}
\label{eq:alpha-zero-s-one-minus-y}
        s(1-y)=s-b+(b-sy)=a+\eta_s+O(1).
\end{equation}
Count failures by writing
\[
        F:=s-W
        =\Bin(t-c_*,1-p_{xy})+\Bin(s-t,1-y),
\]
and let
\[
        \lambda_s=
        (t-c_*)(1-p_{xy})+(s-t)(1-y)=\E F.
\]
Since $p_{xy},y\to1$, for all sufficiently large $s$,
\[
        \sigma_s^2\ge\frac12\lambda_s.
\]
Thus \eqref{eq:threshold-bounded-var} implies $\lambda_s=O(1)$.  Also
\[
        \lambda_s
        =(1-y)\left(\frac{t-c_*}{1-x}+s-t\right)
        \ge(1-y)(s-c_*).
\]
Since $c_*/s\to0$, it follows that $1-y=O(1/s)$.  By
\eqref{eq:alpha-zero-s-one-minus-y},
\begin{equation*}
        a=O(1),
        \qquad
        s-b=a+\eta_s=O(1),
        \qquad
        c_*=\frac{at}{s}+O(1)=O(1).
\end{equation*}
On the other hand, $1-y\ge\delta/s$ and $s-c_*\ge s/2$ for all large $s$, so
$\lambda_s\ge\delta/2$.  Pass to a further subsequence on which
$s-b=k$ is fixed and $\lambda_s\to\lambda\in(0,\infty)$. The largest
Bernoulli parameter occurring in $F$ tends to zero, because
\[
        1-y=O(1/s),
        \qquad
        1-p_{xy}=\frac{1-y}{1-x}=O(1/s).
\]
Consequently, \eqref{eq:le-cam-coupling} gives
\[
        d_{\mathrm{TV}}\bigl(\mathcal L(F),
        \operatorname{Poisson}(\lambda_s)\bigr)
        \le
        \lambda_s\max\{1-p_{xy},1-y\}
        \longrightarrow0.
\]
Therefore
\[
        \Prob(W=b)=\Prob(F=k)
        \longrightarrow e^{-\lambda}\frac{\lambda^k}{k!}>0,
\]
contradicting \eqref{eq:threshold-countersequence}.

It remains to consider \eqref{eq:threshold-boundary-two}.  Put
\[
        \Delta_s=b-a\ge0,
        \qquad
        \eta_s=s-(a+b)\in\{0,1\}.
\]
Then
\begin{equation*}
        s(y-x)=(sy-b)+(b-a)+(a-sx)=\Delta_s+O(1).
\end{equation*}
Since $x\to1/2$, the quantities $p_{xy}$ and $y-x$ are comparable.  Also
$t-c_*\sim s/2$ and $p_{xy}\to0$.  Hence the first term in the variance is
comparable to $s(y-x)$, and \eqref{eq:threshold-bounded-var} implies
\begin{equation*}
        \Delta_s=O(1),
        \qquad
        p_{xy}=O(1/s).
\end{equation*}
The second term in the variance, together with $y\to1/2$, gives
\begin{equation*}
        s-t=O(1).
\end{equation*}
Furthermore,
\[
        b-c_*
        =(b-a)+a\left(1-\frac{t}{s}\right)
        +\left(\frac{at}{s}-c_*\right)=O(1).
\]
Set
\[
        n_s=t-c_*,
        \qquad
        m_s=s-t.
\]
The lower bound $y-x\ge\delta/s$ and the relation $n_s\sim s/2$ imply
$n_s p_{xy}\ge c>0$, while bounded variance and $p_{xy}\to0$ imply
$n_s p_{xy}=O(1)$.
After passing to a further subsequence, we may assume that
\[
        b-c_*=k,
        \qquad
        m_s=m,
        \qquad
        n_s p_{xy}\to\lambda\in(0,\infty),
\]
where $k,m$ are fixed nonnegative integers.  By
\eqref{eq:le-cam-coupling},
\[
        d_{\mathrm{TV}}\bigl(
        \mathcal L(\Bin(n_s,p_{xy})),
        \operatorname{Poisson}(n_s p_{xy})
        \bigr)
        \le n_s p_{xy}^2=(n_s p_{xy})p_{xy}\longrightarrow0.
\]
Also $\Bin(m_s,y)$ converges in distribution, equivalently pointwise on its
fixed finite support, to $\Bin(m,1/2)$.  Therefore
\[
\begin{aligned}
        \Prob(W=b)
        &\longrightarrow
        \sum_{j=0}^{\min\{m,k\}}
        \binom{m}{j}2^{-m}
        e^{-\lambda}\frac{\lambda^{k-j}}{(k-j)!}                 \\
        &\ge
        2^{-m}e^{-\lambda}\frac{\lambda^k}{k!}>0,
\end{aligned}
\]
again contradicting \eqref{eq:threshold-countersequence}.  This proves
\eqref{eq:threshold-conv-local}.

Finally, consider the hypergeometric variable $Y$.  For two consecutive points
in its support,
\[
        \frac{\Prob(Y=c+1)}{\Prob(Y=c)}
        =
        \frac{(t-c)(a-c)}{(c+1)(s-t-a+c+1)}.
\]
The ratio is at least one exactly when
\[
        c+1\le \frac{(a+1)(t+1)}{s+2}.
\]
Thus a mode may be chosen as
\[
        m=\left\lfloor\frac{(a+1)(t+1)}{s+2}\right\rfloor;
\]
when the displayed fraction is an integer, $m-1$ is also a mode.  We record why
this value indeed lies in the hypergeometric support.  The support is
\[
        \max\{0,a+t-s\}\le c\le \min\{a,t\}.
\]
The displayed fraction is nonnegative and is strictly smaller than both $a+1$
and $t+1$, while
\[
        \frac{(a+1)(t+1)}{s+2}-(a+t-s)
        =
        \frac{(s-a)(s-t)+(s-a)+(s-t)+1}{s+2}
        >0.
\]
Hence its floor belongs to the support.  Moreover,
\[
        0\le
        \frac{(a+1)(t+1)}{s+2}-\frac{at}{s}
        =
        \frac{a(s-t)+t(s-a)+s}{s(s+2)}
        <1.
\]
Indeed, for fixed $t$ the numerator is linear in $a$, and its maximum on
$0\le a\le s$ is at most $s^2+s<s(s+2)$.  Hence
$|m-at/s|\le1$.  Since the support contains at most $s+1$ points, the mass at a
mode is at least $1/(s+1)$.  This proves \eqref{eq:hyp-local}.
\end{proof}

\subsection{Candidate edges near the threshold}
\label{subsec:edge-atoms}

Let $(X_v)_{v\in V(G)}$ be independent uniform random variables on $[0,1]$ and
define the threshold random subgraph $H_0\subseteq G$ by
\begin{equation*}
        uv\in E(H_0)
        \quad\Longleftrightarrow\quad
        X_u+X_v\ge 1.
\end{equation*}
For an edge $e=uv\in E(G)$, let $D_e(u)$ and $D_e(v)$ be the degrees of $u$ and
$v$ in $H_0$ after ignoring the edge $e$ itself.

For $0\le a\le b\le d-1$, a positive candidate of type $(a,b)$ is an edge
$e=uv\notin E(H_0)$ such that
\[
        \{D_e(u),D_e(v)\}=\{a,b\}.
\]
Adding such an edge changes the degree-count vector by
\begin{equation*}
        \beta_{a,b}
        =
        -e_a-e_b+e_{a+1}+e_{b+1}.
\end{equation*}
A negative candidate of type $(a,b)$ is an edge $e=uv\in E(H_0)$ such that
\[
        \{D_e(u),D_e(v)\}=\{a,b\}.
\]
Deleting such an edge changes the degree-count vector by $-\beta_{a,b}$.

The following geometric lemma supplies rectangles on both sides of the threshold
line near the two anti-diagonals.
\begin{lemma}
\label{lem:rectangles}
There are absolute constants $\delta>0$ and $C_0>0$ with the following property.
Let $s\ge 1$, let $0\le a\le b\le s$, and suppose
\[
        a+b\in\{s-1,s\}.
\]
For each sign $\sigma\in\{-,+\}$ there is a rectangle
\[
        R=I\times J\subseteq[0,1]^2
\]
of area at least $\delta s^{-2}$ such that every $(x,y)\in R$ satisfies
\begin{equation*}
        x\le y,
        \qquad
        |sx-a|+|sy-b|\le C_0,
        \qquad
        x,\ y-x,\ 1-y\ge \frac{\delta}{s},
\end{equation*}
and
\begin{equation}
\label{eq:rect-side}
        x+y<1
        \quad\text{if }\sigma=-,
        \qquad
        x+y>1
        \quad\text{if }\sigma=+.
\end{equation}
\end{lemma}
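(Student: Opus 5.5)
The plan is an explicit construction: build squares of side $\theta/s$ near the point $(a/s,b/s)$, on the appropriate side of the line $x+y=1$. Write $\eta=s-(a+b)\in\{0,1\}$, so $(a+b)/s=1-\eta/s$ and the point $(a/s,b/s)$ lies on the line $x+y=1-\eta/s$, within distance $O(1/s)$ of the threshold. I take a small absolute constant $\theta$, say $\theta=1/10$, and use squares $R=[x_0,x_0+\theta/s]\times[y_0,y_0+\theta/s]$. Their area is $\theta^2 s^{-2}$, so any $\delta\le\theta^2$ handles the area requirement.

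The centres are chosen by the sign of $\sigma$ and by whether $a$ is near $0$ or $b-a$ is small.

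\textbf{Basic choice.} I aim for $x_0=(a+u)/s$ and $y_0=(b+w)/s$ with shifts $u,w$ bounded by an absolute constant; this gives $|sx-a|+|sy-b|\le C_0$. Then $s(x+y)$ ranges over $[a+b+u+w,\ a+b+u+w+2\theta]$.
\begin{itemize}
\item For $\sigma=+$ I need $a+b+u+w>s$, i.e.\ $u+w>\eta$. The shifts $u+w=1.5$ work.
\item For $\sigma=-$ I need $u+w+2\theta<\eta$. The shifts $u+w=-1.5$ work, giving $s(x+y)\le s+1-1.5+0.2<s$.
\end{itemize}
So the side condition \eqref{eq:rect-side} is easy. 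The real content is the three margin conditions $x\ge\delta/s$, $y-x\ge\delta/s$, $1-y\ge\delta/s$, which can conflict with the shifts when $a$ is tiny or $b-a$ is tiny.

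\textbf{Case analysis on the margins.} Each constraint is handled by a sign choice of the shifts.
\begin{itemize}
\item \emph{$1-y$ margin.} We have $s(1-y)\ge s-b-w-\theta=a+\eta-w-\theta$. When $a=0$ and $\eta=0$ (so $b=s$), I need $w\le-1/2$. For $\sigma=+$ this forces $u\ge 2$. That is allowed: $x$ may exceed $a/s$ by $O(1/s)$ while keeping $x\le y$, since $y\approx (s-1/2)/s$ is far above $x\approx 2/s$ for large $s$.
\item \emph{$x$ margin.} Choosing $u\ge 1/2$ whenever $a$ is small secures $sx\ge 1/2$.
\item \emph{$y-x$ margin.} We have $s(y-x)\ge b-a+w-u-\theta$. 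When $b-a\in\{0,1\}$ I take $w-u\ge 1/2$. For $\sigma=-$ with $u+w=-1.5$ this gives, e.g., $u=-1$, $w=-1/2$, which requires $a\ge 2$ to keep $x\ge\delta/s$. When $b-a$ is small, $a\approx s/2$, so this is fine for $s$ large.
\end{itemize}

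\textbf{Finitely many cases.} I will do a finite table: $\sigma\in\{\pm\}$, $\eta\in\{0,1\}$, and whether $a\le 2$, $b-a\le 2$, or both. In each entry I pick explicit $(u,w)$ from a fixed finite set such as $\{-2,\dots,2\}$ in half-integers, and verify the four inequalities, with margin $\theta$. When $a$ and $b-a$ are both small, $s\le a+b+1$ is bounded; I treat these small $s$ separately. For bounded $s$, the relevant open region (the intersection of the side condition with the three strict positivity conditions near $(a/s,b/s)$) is a nonempty open set, because $s\ge1$ and the configuration is nondegenerate. So it contains a rectangle of positive area, and taking the minimum over the finitely many configurations gives an absolute $\delta$. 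Finally set $\delta$ to the minimum of $\theta^2$, the margins obtained, and these small-$s$ constants.

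\textbf{Expected obstacle.} I expect the main obstacle to be this degenerate small-$s$ / corner bookkeeping: for instance $s=1$, $a=0$, $b=1$, where one needs $0<x<y<1$ with $x+y<1$ or $x+y>1$, which is clearly possible but must be fitted into the absolute constants. The large-$s$ shifts themselves are routine.
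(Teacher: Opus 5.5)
Your plan is correct and is essentially the paper's argument: rectangles of side $\asymp 1/s$ placed at bounded rescaled shifts $x=(a+\alpha)/s$, $y=(b+\beta)/s$ from $(a,b)$, with the shifts chosen case by case according to $\sigma$, $\eta=s-(a+b)$ and the boundary configurations. The difference is only bookkeeping. The paper uses smaller shifts, with centres $(\alpha_0,\beta_0)$ among $(1/4,1/2)$, $(1/2,3/4)$, $(1/4,-1/2)$, $(-1/2,1/4)$, $(1/2,-1/4)$, split only by $\eta$, $\sigma$ and (when $\eta=0$) whether $a=0$. This keeps $a+\alpha$, $s-b-\beta$ and $b-a+\beta-\alpha$ at least $3/16$ for every $s\ge 1$, so it needs neither your large-$s$ regime nor your finite-configuration step for small $s$. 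In that small-$s$ step you should take $C_0\ge 2s$, which makes the proximity condition automatic.
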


\begin{proof}
Take
\[
        \rho=\frac1{32},
        \qquad
        \delta=\frac1{256},
        \qquad
        C_0=2.
\]
Write
\[
        x=\frac{a+\alpha}{s},
        \qquad
        y=\frac{b+\beta}{s},
        \qquad
        \eta=s-(a+b)\in\{0,1\}.
\]
Choose a center $(\alpha_0,\beta_0)$ from the following table:
\[
\begin{array}{c|c|c}
\eta & \sigma & (\alpha_0,\beta_0)\\ \hline
1 & - & (1/4,1/2)\\
1 & + & (1/2,3/4)\\
0 & -,\ a=0 & (1/4,-1/2)\\
0 & -,\ a>0 & (-1/2,1/4)\\
0 & +,\ b=s & (1/2,-1/4)\\
0 & +,\ b<s & (1/4,1/2)
\end{array}
\]
Let $\alpha$ and $\beta$ vary independently in the intervals of radius $\rho$
around $\alpha_0$ and $\beta_0$, respectively, and let $R$ be the resulting
rectangle in the $(x,y)$-plane.  Its area is
\[
        \frac{(2\rho)^2}{s^2}=\frac{1}{256s^2}=\delta s^{-2}.
\]

At every center in the table, direct substitution shows that each of
\[
        a+\alpha_0,
        \qquad
        s-b-\beta_0,
        \qquad
        b-a+\beta_0-\alpha_0
\]
is at least $1/4$.  Under the allowed perturbations, the first two quantities
decrease by at most $\rho$, and the third decreases by at most $2\rho$.
Consequently,
\[
        a+\alpha,
        \quad
        s-b-\beta,
        \quad
        b-a+\beta-\alpha
        \ge \frac{3}{16}>\delta.
\]
These inequalities imply $R\subseteq[0,1]^2$, $x\le y$, and
\[
        x,\ y-x,\ 1-y\ge\frac{\delta}{s}.
\]
Also $|\alpha|+|\beta|<2$, so
$|sx-a|+|sy-b|\le C_0$.

Finally, at every center the distance between $\alpha_0+\beta_0$ and $\eta$ on
the required side is at least $1/4$.  Perturbing both coordinates changes their
sum by at most $2\rho=1/16$.  Hence
\[
        \alpha+\beta<\eta
        \quad\text{for }\sigma=-,
        \qquad
        \alpha+\beta>\eta
        \quad\text{for }\sigma=+.
\]
Since $s(x+y-1)=\alpha+\beta-\eta$, this is exactly
\eqref{eq:rect-side}.
\end{proof}

The next lemma is the uniform edge-atom estimate.
\begin{lemma}
\label{lem:edge-atom}
There is an absolute constant $c_{\rm at}>0$ such that the following holds.  Let
$G$ be a simple $d$-regular graph with $d\ge2$, let $e=uv\in E(G)$, and let
$0\le a\le b\le d-1$ satisfy
\begin{equation*}
        a+b\in\{d-2,d-1\}.
\end{equation*}
Then
\begin{equation}
\label{eq:positive-atom}
        \Prob(e\text{ is a positive candidate of type }(a,b))
        \ge c_{\rm at}d^{-4},
\end{equation}
and
\begin{equation}
\label{eq:negative-atom}
        \Prob(e\text{ is a negative candidate of type }(a,b))
        \ge c_{\rm at}d^{-4}.
\end{equation}
\end{lemma}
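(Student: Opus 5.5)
We set $s=d-1$, so that $a+b\in\{s-1,s\}$ and $0\le a\le b\le s$; this places us exactly in the setting of Lemmas~\ref{lem:rectangles} and~\ref{lem:local-estimates}. Fix a sign $\sigma$, let $R=I\times J$ be the rectangle supplied by Lemma~\ref{lem:rectangles}, and restrict to the event $(X_u,X_v)\in R$. This event has probability at least $\delta s^{-2}\ge \delta d^{-2}$. On it, the edge $e$ lies in $H_0$ exactly when $\sigma=+$, so $e$ has the right status. Conditioning on $X_u=x$, $X_v=y$, it remains to show that
\[
\Prob\bigl(D_e(u)=a,\ D_e(v)=b\mid X_u=x,X_v=y\bigr)\ge c\,s^{-2}
\]
uniformly over $(x,y)\in R$. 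Integrating over $R$ then gives the claimed $c_{\rm at}d^{-4}$. (We take the assignment $u\mapsto a$, $v\mapsto b$, which is enough for a lower bound.)

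Next we describe the joint law. Let $N_u=N(u)\setminus\{v\}$ and $N_v=N(v)\setminus\{u\}$, each of size $s$, and let $T=N_u\cap N_v$ with $t=|T|$. Given $x$ and $y$, we have $D_e(u)=\#\{w\in N_u:X_w\ge 1-x\}$ and $D_e(v)=\#\{w\in N_v:X_w\ge1-y\}$. Since $x\le y$, we have $1-y\le 1-x$. The vertices in $N_u\setminus T$ contribute $\Bin(s-t,x)$ to $D_e(u)$ only. The vertices in $N_v\setminus T$ contribute $\Bin(s-t,y)$ to $D_e(v)$ only. A vertex $w\in T$ counted by $u$ is automatically counted by $v$, while a vertex of $T$ not counted by $u$ is counted by $v$ with conditional probability $p_{xy}=(y-x)/(1-x)$. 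All of these are independent.

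We then decompose the event. Write $C$ for the number of vertices of $T$ counted by $u$, so $C\sim\Bin(t,x)$. We get
\[
D_e(u)=C+\Bin(s-t,x),\qquad D_e(v)=C+\Bin(t-C,p_{xy})+\Bin(s-t,y).
\]
Rather than work with the binomial $C$ directly, it is cleaner to condition on $D_e(u)=a$. Given this, the $a$ counted neighbours of $u$ form a uniformly random $a$-subset of $N_u$, so $C$ is hypergeometric, and the remaining coordinates stay independent with the stated laws. Hence
\[
\Prob(D_e(u)=a,D_e(v)=b)\ge \Prob(\Bin(s,x)=a)\cdot\Prob(Y=c_*)\cdot\Prob(W=b),
\]
where $Y$ is hypergeometric and $W=c_*+\Bin(t-c_*,p_{xy})+\Bin(s-t,y)$. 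One point needs care here: given $C=c_*$, the non-counted vertices of $T$ number exactly $t-c_*$, and their statuses are independent with parameter $p_{xy}$. This holds because conditioning on $X_w<1-x$ leaves them i.i.d., and it is independent of which $a$-subset was chosen.

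Finally we bound the three factors. By Lemma~\ref{lem:local-estimates}(1), with $|sx-a|\le C_0$ and $\delta/s\le x\le 1-\delta/s$, the first factor is at least $c_B(s+1)^{-1/2}$. By part~(3), we may choose $c_*$ with $|c_*-at/s|\le1$ and hypergeometric mass at least $1/(s+1)$. By part~(2), applied with $K=1$, the third factor is at least $c_T(s+1)^{-1/2}$. Multiplying gives $\gtrsim (s+1)^{-2}$, as required.

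The main obstacle is the conditional independence bookkeeping in the decomposition step: we must check that conditioning on the exact value $D_e(u)=a$ and on $C=c_*$ leaves the $v$-side counts with precisely the law assumed in Lemma~\ref{lem:local-estimates}(2). The remaining steps are direct applications of the earlier lemmas.
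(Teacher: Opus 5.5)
Your proposal is correct and follows the paper's proof essentially step for step: the same rectangle from Lemma~\ref{lem:rectangles}, the binomial factor $c_B(s+1)^{-1/2}$ for $D_e(u)$, the hypergeometric mode $c_*$ for the number of common neighbours counted by $u$, and the threshold-convolution bound for $D_e(v)=c_*+\Bin(t-c_*,p_{xy})+\Bin(s-t,y)$, integrated over an area of order $s^{-2}$. The only cosmetic difference is in the choice of $K$: you implicitly take $K=C_0$ in part~(1) of Lemma~\ref{lem:local-estimates}, which is needed since only $|sx-a|\le C_0$ is known, and $K=1$ in part~(2), whereas the paper fixes $K=\max\{C_0,1\}$ once for all applications.
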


\begin{proof}
Put $s=d-1$.  We prove the positive estimate first.  It is enough to
lower-bound the ordered event $D_e(u)=a$ and $D_e(v)=b$ on a region where
$X_u\le X_v$, since this event is contained in the unordered type $(a,b)$.
Apply Lemma~\ref{lem:local-estimates} throughout this proof with the constants
$\delta,C_0$ from Lemma~\ref{lem:rectangles} and with
$K=\max\{C_0,1\}$.  By Lemma~\ref{lem:rectangles}, there is a rectangle $R$
of area $\Omega(s^{-2})$ such that, for all $(x,y)\in R$,
\begin{equation}
\label{eq:edge-atom-rect}
        x\le y,
        \qquad
        |sx-a|+|sy-b|\le C_0,
        \qquad
        x,\ y-x,\ 1-y\ge \frac{\delta}{s},
        \qquad
        x+y<1.
\end{equation}
Thus, on the event $X_u=x$ and $X_v=y$ with $(x,y)\in R$, the edge $uv$ is
absent from $H_0$.

Let $t$ be the number of common neighbours of $u$ and $v$ other than each other.
Conditional on $X_u=x$ and $X_v=y$,
\[
        D_e(u)\sim\Bin(s,x).
\]
By \eqref{eq:bin-local} and \eqref{eq:edge-atom-rect},
\begin{equation}
\label{eq:u-local}
        \Prob(D_e(u)=a\mid X_u=x,X_v=y)
        \ge c s^{-1/2}.
\end{equation}

Now condition further on $D_e(u)=a$.  Among the $t$ common neighbours of $u$ and
$v$, let $Y$ be the number already joined to $u$ in $H_0$.  Conditional on
$D_e(u)=a$, the random variable $Y$ is hypergeometric with parameters
$(s,t,a)$.  By \eqref{eq:hyp-local}, there is an integer $c_*\in\supp(Y)$ such
that
\begin{equation}
\label{eq:cstar}
        \Prob(Y=c_*\mid D_e(u)=a,X_u=x,X_v=y)
        \ge \frac{1}{s+1},
        \qquad
        \left|c_*-\frac{at}{s}\right|\le 1.
\end{equation}
Moreover $c_*\le a\le s/2$, because $a\le b$ and $a+b\le s$.

Condition now on $D_e(u)=a$ and $Y=c_*$.  The $c_*$ common neighbours already
joined to $u$ are automatically joined to $v$, since $y\ge x$.  Conditional on
the identities of the remaining $t-c_*$ common neighbours, their labels are still
independent with conditional distribution uniform on $[0,1-x)$, so each is
joined to $v$ with conditional probability
\[
         p_{xy}=\frac{y-x}{1-x}.
\]
The $s-t$ private neighbours of $v$ are independent of this conditioning and are
joined to $v$ independently with probability $y$.  The preceding description does
not depend on the particular identities of the common neighbours, only on their
number $c_*$.  Hence, after removing the conditioning on identities,
\begin{equation*}
        D_e(v)
        \stackrel{d}{=}
        c_*+\Bin(t-c_*,p_{xy})+\Bin(s-t,y),
\end{equation*}
where the two binomial random variables on the right-hand side are independent.
The hypotheses of the threshold-convolution estimate are then exactly
\eqref{eq:edge-atom-rect} and \eqref{eq:cstar}. Therefore
\begin{equation}
\label{eq:v-local}
        \Prob(D_e(v)=b\mid D_e(u)=a,Y=c_*,X_u=x,X_v=y)
        \ge c s^{-1/2}.
\end{equation}

Multiplying \eqref{eq:u-local}, \eqref{eq:cstar}, and \eqref{eq:v-local}, and
then integrating over the rectangle $R$, gives
\[
        \Prob(e\text{ is a positive candidate of type }(a,b))
        \ge c s^{-2}\cdot s^{-1/2}\cdot s^{-1}\cdot s^{-1/2}
        \ge c_{\rm at}d^{-4}.
\]
This proves \eqref{eq:positive-atom}.

For the negative estimate, use the rectangle supplied by Lemma~\ref{lem:rectangles}
with $x+y>1$.  Then $uv\in E(H_0)$.  The variables $D_e(u)$ and $D_e(v)$ ignore
the contribution of $uv$ itself, so the conditional distribution of all other
incident edges is the same as in the argument above.  The same multiplication
therefore proves \eqref{eq:negative-atom}.
\end{proof}

\section{Anti-diagonal lattice reduction}
\label{sec:lattice}

For $0\le i\le d-1$, write
\[
        f_i=e_{i+1}-e_i.
\]
Then
\[
        \beta_{a,b}=f_a+f_b.
\]
In the lattice lemma below we use this formula for ordered pairs $(a,b)$; of
course $\beta_{a,b}=\beta_{b,a}$.
For $z\in L_d$, define integers $c_0,\ldots,c_{d-1}$ by
\begin{equation*}
        z=\sum_{i=0}^{d-1} c_i f_i,
        \qquad
        c_i=-\sum_{r=0}^i z_r.
\end{equation*}
Also define
\begin{equation}
\label{eq:S-P-def}
        S(z)=\sum_{i=0}^{d-1}c_i
        =
        \sum_{i=0}^d i z_i,
        \qquad
        P_j(z)=\sum_{i=j}^{d-1}c_i
        \quad (1\le j\le d-1).
\end{equation}
The equality for $S(z)$ follows from $\sum_i z_i=0$.  Since $z\in L_d$, $S(z)$
is even.

For later use, extend $S$ and $P_j$ linearly to arbitrary
$w=(w_0,\ldots,w_d)\in\mathbb R^{d+1}$.  For $1\le j\le d-1$, put
\[
        \phi_j(k)=
        \begin{cases}
        d-j, & k\le j,\\
        d-k, & j<k<d,\\
        0, & k=d,
        \end{cases}
\]
and define
\begin{equation}
\label{eq:S-P-linear-extension}
        S(w)=\sum_{i=0}^d i w_i,
        \qquad
        P_j(w)=-\sum_{k=0}^d \phi_j(k)w_k
        \quad (1\le j\le d-1).
\end{equation}
When $\sum_i w_i=0$, these definitions agree with \eqref{eq:S-P-def}.

The following lattice lemma is the deterministic reduction to the two
anti-diagonals.
\begin{lemma}
\label{lem:refined-lattice}
There is an absolute constant $C_L$ such that, for every $d\ge2$ and every
$z\in L_d$, there is a representation
\begin{equation}
\label{eq:refined-representation}
        z=
        \sum_{\substack{0\le a,b\le d-1\\a+b\in\{d-2,d-1\}}}
        \lambda_{a,b}\beta_{a,b},
        \qquad
        \lambda_{a,b}\in\mathbb Z,
\end{equation}
with
\begin{equation}
\label{eq:refined-cost}
        \sum_{a,b}|\lambda_{a,b}|
        \le
        C_L\left(
        \sum_{j=1}^{d-1}|P_j(z)|+d|S(z)|
        \right).
\end{equation}
\end{lemma}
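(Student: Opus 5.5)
The plan is to find a few explicit anti-diagonal combinations that produce the basic differences $f_j-f_{j-1}$ and the vector $2f_0$. We then expand $z$ in terms of these by summation by parts, which makes the prefix sums $P_j(z)$ and $S(z)$ appear as the coefficients.

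For $0\le a\le d-1$ put $g_a=\beta_{a,d-1-a}=f_a+f_{d-1-a}$, which lies on the anti-diagonal $a+b=d-1$. For $0\le a\le d-2$ put $h_a=\beta_{a,d-2-a}=f_a+f_{d-2-a}$, which lies on $a+b=d-2$. All indices here lie in $\{0,\ldots,d-1\}$, so both families are allowed in \eqref{eq:refined-representation}. For $1\le j\le d-1$ we compute
\[
h_{j-1}-g_j=f_{j-1}+f_{d-1-j}-f_j-f_{d-1-j}=f_{j-1}-f_j .
\]
Hence each difference $f_j-f_{j-1}=g_j-h_{j-1}$ is an integer combination of allowed $\beta$'s of cost $2$.

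Next we use summation by parts. With $c_i$ as defined before the lemma, set $P_d:=0$. We claim
\[
z=\sum_{i=0}^{d-1}c_if_i=\sum_{j=1}^{d-1}P_j(z)\,(f_j-f_{j-1})+S(z)\,f_0 .
\]
To check this, compare the coefficient of each $f_i$ on the right:
\begin{itemize}
\item for $1\le i\le d-1$ it is $P_i-P_{i+1}=c_i$;
\item for $i=0$ it is $-P_1+S=c_0$.
\end{itemize}
Substituting the previous identity, the first sum is represented with total cost at most $2\sum_{j=1}^{d-1}|P_j(z)|$.

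It remains to represent $S(z)f_0$, and this is where parity enters, since a single $f_0$ is not in the lattice. Because $z\in L_d$, $S(z)$ is even. Telescoping gives $f_{d-1}-f_0=\sum_{j=1}^{d-1}(f_j-f_{j-1})$, which has cost at most $2(d-1)$. Therefore
\[
2f_0=g_0-(f_{d-1}-f_0)
\]
is represented with cost at most $1+2(d-1)\le 2d$. Multiplying by $S(z)/2$, the term $S(z)f_0$ costs at most $d|S(z)|$.

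Adding the two pieces and collecting coefficients gives integers $\lambda_{a,b}$ satisfying \eqref{eq:refined-representation}. Merging repeated types only lowers $\sum|\lambda_{a,b}|$ by the triangle inequality, so the cost bound survives, and we get \eqref{eq:refined-cost} with $C_L=2$. The only step requiring care is the bookkeeping of index ranges, including the degenerate case $d=2$, where $g_0=g_1=2f_0+\ldots$ reduce to $f_0+f_1$ and $h_0=2f_0$. The identities above remain valid verbatim in that case, so I expect no genuine obstacle.
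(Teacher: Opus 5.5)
Your proof is correct and follows the paper's argument. The adjacent differences $f_j-f_{j-1}=g_j-h_{j-1}$ are exactly the paper's $A_i-B_i$, and your summation-by-parts identity with coefficients $P_j(z)$ and $S(z)$ is the paper's decomposition of $(c_0,\ldots,c_{d-1})$. The only variation is that you obtain $2f_0$ as $g_0-(f_{d-1}-f_0)$, whereas the paper uses the loop generator $2f_m$ on whichever anti-diagonal has even sum; this avoids the parity case split on $d$ and gives the explicit constant $C_L=2$.
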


\begin{proof}
In this proof we use the notation $\beta_{a,b}=f_a+f_b$ for ordered pairs as
well; of course $\beta_{a,b}=\beta_{b,a}$.  In the $f$-coordinate system, the
generator $\beta_{a,b}$ is the vector $\varepsilon_a+\varepsilon_b$ in
$\mathbb Z^d$.  Put
\[
        A_i=\varepsilon_i+\varepsilon_{d-1-i}
        \quad (0\le i\le d-1),
        \qquad
        B_i=\varepsilon_i+\varepsilon_{d-2-i}
        \quad (0\le i\le d-2).
\]
These are exactly the generators on the two anti-diagonals.  For
$0\le i\le d-2$,
\[
        A_i-B_i=\varepsilon_{d-1-i}-\varepsilon_{d-2-i}.
\]
Thus every adjacent difference $\varepsilon_j-\varepsilon_{j-1}$ is generated
with absolute coefficient cost at most $2$.

One of $d-1$ and $d-2$ is even.  Therefore one of the two anti-diagonals contains
a loop generator $2\varepsilon_m$.  Using adjacent differences,
\begin{equation*}
        2\varepsilon_0
        =
        2\varepsilon_m+
        2\sum_{r=1}^{m}(\varepsilon_{r-1}-\varepsilon_r)
\end{equation*}
is generated with coefficient cost $O(d)$.

By definition of $S(z)$ and $P_j(z)$,
\begin{equation}
\label{eq:c-decomposition}
        (c_0,\ldots,c_{d-1})
        =
        S(z)\varepsilon_0+
        \sum_{j=1}^{d-1}P_j(z)(\varepsilon_j-\varepsilon_{j-1}).
\end{equation}
Since $S(z)$ is even, the first term is an integer multiple of
$2\varepsilon_0$.  The previous two paragraphs and \eqref{eq:c-decomposition}
give \eqref{eq:refined-representation} and \eqref{eq:refined-cost}.
\end{proof}

\section{Random discrepancy in the refined norm}
\label{sec:random-discrepancy}

Let
\[
        M_i=m(H_0,i),
        \qquad
        M=(M_0,\ldots,M_d).
\]
For every vertex $v$, conditioning on $X_v=x$ gives
\[
        d_{H_0}(v)\sim \Bin(d,x).
\]
Therefore
\begin{equation}
\label{eq:uniform-degree-law}
        \Prob(d_{H_0}(v)=i)
        =
        \binom{d}{i}\int_0^1 x^i(1-x)^{d-i}\,dx
        =
        \frac{1}{d+1},
        \qquad
        0\le i\le d.
\end{equation}
Thus $\E M_i=n/(d+1)$.

The next lemma bounds the refined discrepancy of the threshold graph.
\begin{lemma}
\label{lem:expected-refined}
Let $q$ satisfy \eqref{eq:target-vector}, and put
\[
        z=q-M.
\]
There is an absolute constant $C_R$ such that
\begin{equation}
\label{eq:expected-refined}
        \E\left[
        \sum_{j=1}^{d-1}|P_j(z)|+d|S(z)|
        \right]
        \le
        C_R(d^2\sqrt n+d^3).
\end{equation}
\end{lemma}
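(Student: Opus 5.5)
We split $z=q-M$ as $z=(q-\E M)+(\E M-M)$ and use the linearity of $S$ and $P_j$ in the extended form \eqref{eq:S-P-linear-extension}. Both pieces have coordinate sum zero: $\sum_i q_i=n=\sum_i \E M_i$ by \eqref{eq:target-vector} and \eqref{eq:uniform-degree-law}.

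\textbf{Deterministic part.} Here $|q_i-\E M_i|\le 1$ for all $i$, and $0\le\phi_j(k)\le d$. Hence $|P_j(q-\E M)|\le d(d+1)$ and $|S(q-\E M)|\le \sum_i i\le d^2$. Summing over $j$ and adding $d|S|$ gives a contribution $O(d^3)$. This is the source of the $d^3$ term.

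\textbf{Random part.} For the fluctuation $w=\E M-M$ we bound $\E|P_j(w)|\le \sqrt{\Var P_j(M)}$ and $\E|S(w)|\le\sqrt{\Var S(M)}$ by Cauchy--Schwarz. The required bounds are
\[
\Var P_j(M)=O(d^2 n)\ \text{for each } j,\qquad \Var S(M)=O(d^2 n).
\]
Together these give $\sum_j \E|P_j(w)|=O(d^2\sqrt n)$ and $d\,\E|S(w)|=O(d^2\sqrt n)$.

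Both quantities are sums over vertices. We have $S(M)=\sum_v d_{H_0}(v)=2e(H_0)$, and $P_j(M)=-\sum_v \phi_j(d_{H_0}(v))$, where $\phi_j$ is $1$-Lipschitz in $k$ and takes values in $[0,d]$. So each is of the form $F=\sum_v g(d_{H_0}(v))$ with $g$ $1$-Lipschitz and $|g|\le d$.

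\textbf{Variance bound via Efron--Stein.} We treat $F$ as a function of the independent labels $(X_v)$. Resampling $X_u$ changes only the degrees of $u$ and of its $d$ neighbours. Each neighbour's degree changes by at most $1$, so its term moves by at most $1$. The term of $u$ itself moves by at most $d$. Thus the change is at most $2d$, and Efron--Stein gives
\[
\Var F\le \tfrac12\sum_u \E\bigl(F-F^{(u)}\bigr)^2\le \tfrac12\, n(2d)^2=O(nd^2).
\]

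The main obstacle is making sure the per-vertex change is $O(d)$ rather than $O(d^2)$, since a crude bound would cost an extra factor of $d$. The Lipschitz property of $\phi_j$ and of the identity function is exactly what prevents this. No correlation analysis through common neighbours is needed, because Efron--Stein handles the dependence directly.

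Combining the deterministic $O(d^3)$ bound with the random $O(d^2\sqrt n)$ bound yields \eqref{eq:expected-refined}.
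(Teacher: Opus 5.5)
Your proposal is correct and follows essentially the same route as the paper. You split off the deterministic mean-deviation term, which gives the $O(d^3)$ contribution. You then bound the variances of $P_j(M)=-\sum_v\phi_j(d_{H_0}(v))$ and of $S(M)=2|E(H_0)|$ by $O(nd^2)$ via Efron--Stein, using the bounded-difference constant $2d$ that comes from the $1$-Lipschitz property. The only differences are cosmetic: you use the resampling form of Efron--Stein with constant $\tfrac12$ plus an explicit Cauchy--Schwarz step, while the paper quotes the bounded-differences corollary with constant $\tfrac14$.
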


\begin{proof}
Fix $1\le j\le d-1$.  By the linear extension in
\eqref{eq:S-P-linear-extension},
\[
        P_j(M)
        =
        -\sum_{v\in V(G)}\phi_j(d_{H_0}(v)).
\]
The function $\phi_j$ is $1$-Lipschitz on $\{0,\ldots,d\}$.  Fix a vertex
$u$ and change the single label $X_u$, keeping all other labels fixed.  Then
only the edges incident with $u$ can change.  Consequently $d_{H_0}(u)$ can
change by at most $d$, while for each neighbour $w\in N_G(u)$ the degree
$d_{H_0}(w)$ can change by at most $1$; all other degrees are unchanged.
Hence
\[
        |P_j(M')-P_j(M)|
        \le d+\sum_{w\in N_G(u)}1
        \le 2d .
\]
Thus, as a function of the independent labels $(X_v)_{v\in V(G)}$, the random
variable $P_j(M)$ has the bounded-differences property with constants at most
$2d$.  By the bounded-differences form of the Efron--Stein inequality
\cite[Corollary~3.2]{BLM},
\begin{equation*}
        \Var(P_j(M))
        \le
        \frac14\sum_{v\in V(G)}(2d)^2
        =
        nd^2.
\end{equation*}
Since \eqref{eq:target-vector} and \eqref{eq:uniform-degree-law} imply
$|q_i-\E M_i|\le 1$ for every $i$,
\begin{equation*}
        |P_j(q)-\E P_j(M)|\le C d^2.
\end{equation*}
Therefore
\begin{equation}
\label{eq:Pj-first-moment}
        \E|P_j(z)|
        \le
        C d\sqrt n+C d^2.
\end{equation}
Summing \eqref{eq:Pj-first-moment} over $j=1,\ldots,d-1$ gives
\begin{equation}
\label{eq:Pj-sum-bound}
        \E\sum_{j=1}^{d-1}|P_j(z)|
        \le
        C d^2\sqrt n+C d^3.
\end{equation}

It remains to estimate $S(z)$.  We have
\[
        S(M)=\sum_{i=0}^d iM_i=2|E(H_0)|.
\]
Changing one label toggles at most $d$ incident edges, so $S(M)$ changes by at most $2d$. Again by \cite[Corollary~3.2]{BLM}, 
\begin{equation*}
\Var(S(M)) \le \frac14\sum_{v\in V(G)}(2d)^2 = nd^2. 
\end{equation*}
Also,
\begin{equation*}
        |S(q)-\E S(M)|\le C d^2.
\end{equation*}
Thus
\begin{equation}
\label{eq:S-first-moment}
        \E|S(z)|\le C d\sqrt n+C d^2.
\end{equation}
Combining \eqref{eq:Pj-sum-bound} and \eqref{eq:S-first-moment} proves
\eqref{eq:expected-refined}.
\end{proof}

The following supply lemma gives, simultaneously for all signed anti-diagonal
types, enough candidate edges for the later greedy selection.
\begin{lemma}
\label{lem:supply}
Fix $\varepsilon>0$.  For every simple $d$-regular graph $G$ on $n$ vertices
with $2\le d\le n^{1/12-\varepsilon}$, with probability $1-o(1)$, every signed
type
\[
        (a,b,+),\ (a,b,-),
        \qquad
        0\le a\le b\le d-1,
        \qquad
        a+b\in\{d-2,d-1\},
\]
has at least
\begin{equation}
\label{eq:supply-lower}
        c_S n d^{-3}
\end{equation}
candidate edges, where $c_S>0$ is an absolute constant and the $o(1)$ is uniform
in $G$ and $d$ in this range.  Here $+$ denotes a positive candidate and $-$
denotes a negative candidate.
\end{lemma}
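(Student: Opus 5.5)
For each fixed signed type $\tau=(a,b,\pm)$, let $N_\tau$ be the number of edges of $G$ that are candidates of type $\tau$. The plan is a first-moment lower bound, a concentration bound via bounded differences, and a union bound over the at most $2d$ signed types.

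\emph{Expectation.} By Lemma~\ref{lem:edge-atom} and linearity, since $|E(G)|=nd/2$,
\[
  \E N_\tau\ge \frac{nd}{2}\,c_{\rm at}d^{-4}=\frac{c_{\rm at}}{2}\,nd^{-3}.
\]
We will take $c_S=c_{\rm at}/4$.

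\emph{Bounded differences.} View $N_\tau$ as a function of the independent labels $(X_v)$. Whether an edge $e=uv$ is a candidate of type $\tau$ depends only on $X_u$, $X_v$ and the labels of $N_G(u)\cup N_G(v)$. So changing one label $X_w$ can affect only edges with an endpoint in $N_G[w]$, and there are at most $(d+1)d\le 2d^2$ such edges. Thus $N_\tau$ has bounded differences with constants $c_w\le 2d^2$. McDiarmid's inequality then gives
\[
  \Prob\bigl(N_\tau\le \E N_\tau - t\bigr)\le \exp\!\left(-\frac{2t^2}{\sum_w c_w^2}\right)\le \exp\!\left(-\frac{t^2}{2nd^4}\right).
\]
Take $t=\frac{c_{\rm at}}{4}nd^{-3}$. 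The exponent is then of order $n^2d^{-6}/(nd^4)=n d^{-10}$. For $d\le n^{1/12-\varepsilon}$ this is at least $n^{1/6+10\varepsilon}$, so each failure probability is at most $\exp(-c\,n^{1/6})$.

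\emph{Union bound.} There are at most $2d\le 2n$ signed types, so the total failure probability is at most $2n\exp(-cn^{1/6})=o(1)$, uniformly in $G$ and $d$. If one wants to avoid citing McDiarmid, the Efron--Stein variance bound of \cite[Corollary~3.2]{BLM} used in Lemma~\ref{lem:expected-refined} also works. It gives $\Var N_\tau\le nd^4$, and Chebyshev then gives a failure probability of $O(nd^4/(n^2d^{-6}))=O(d^{10}/n)$ per type. Summed over the $O(d)$ types this is $O(d^{11}/n)=o(1)$, precisely because $d\le n^{1/12-\varepsilon}$.

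The only step requiring care is the bounded-difference constant: it must be $O(d^2)$, not larger, so that the Chebyshev route still closes. The dependency-radius argument above secures this, since candidacy of $uv$ is determined by labels within distance one of $\{u,v\}$. No other obstacle is expected, since all the probabilistic difficulty is already contained in Lemma~\ref{lem:edge-atom}.
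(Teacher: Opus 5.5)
Your proof is correct, but it reaches the concentration step by a different route from the paper. Both arguments rest on the same locality fact: $I_e$ is determined by the labels on $A_e=N[u]\cup N[v]$. The paper turns this into a dependency-graph second moment. Each edge $e$ has only $O(d^3)$ edges $f$ with $A_e\cap A_f\neq\emptyset$, and each such covariance is bounded by $\Prob(I_e=1)$. This gives $\Var Y_\tau\le Cd^3\mu_\tau$. Chebyshev then yields failure probability $O(d^3/\mu_\tau)=O(d^6/n)$ per type, and $O(d^7/n)$ over the $2d$ types. You instead express the locality as a bounded-difference constant $d(d+1)$ per label and apply McDiarmid, which gives the exponentially small tail $\exp(-c\,nd^{-10})$. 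Your Efron--Stein fallback gives $\Var N_\tau\le nd^4$ and total failure $O(d^{11}/n)$.

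The trade-off is as follows. The paper's variance bound is proportional to the mean rather than to $nd^4$, so it is polynomially sharper: it closes for $d=o(n^{1/7})$, whereas your Chebyshev variant needs $d=o(n^{1/11})$ and your McDiarmid variant needs $nd^{-10}\gg\log d$. Your McDiarmid route, on the other hand, gives a far stronger tail estimate. In all variants the supply lemma is not the bottleneck. The exponent $1/12$ comes from the greedy comparison $nd^{-3}\gg d^3\sqrt n$ in the final section, so each version works comfortably in the stated range.
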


\begin{proof}
Let $Y_\tau$ be the number of candidates of a fixed signed type $\tau$.  We use
the standard dependency-graph second-moment bound, with the dependency graph
verified explicitly below.  By Lemma~\ref{lem:edge-atom},
\begin{equation*}
        \mu_\tau:=\E Y_\tau
        \ge
        c nd^{-3}.
\end{equation*}
Write $I_e$ for the indicator that an edge $e$ has signed type $\tau$.  For a
fixed edge $e=uv$, the event $\{I_e=1\}$ depends only on the labels in the
closed neighborhood $A_e=N[u]\cup N[v]$, where $N[u]=N_G(u)\cup\{u\}$ denotes the closed neighborhood.  If $A_e\cap A_f=\emptyset$, then
$I_e$ and $I_f$ are independent.  To count the possible exceptions, fix
$w\in A_e$.  If $w\in A_f$ for an edge $f=xy$, then
$w\in N[x]$ or $w\in N[y]$, and hence, by symmetry of the graph metric,
$x\in N[w]$ or $y\in N[w]$.  Thus at least one endpoint of $f$ lies in the
closed neighborhood $N[w]$, giving at most $d(d+1)$ possible edges $f$ for this
fixed $w$.  Since $|A_e|\le2d+2$, for each fixed $e$ there are at most
$2d(d+1)^2=O(d^3)$ edges $f$ for which independence is not forced.
Let $\mathcal D(e)$ denote this set of possibly dependent edges.  Then
\[
\begin{aligned}
        \Var(Y_\tau)
        &=\sum_e \Var(I_e)+2\sum_{e<f}\operatorname{Cov}(I_e,I_f) \\
        &\le \sum_e \Prob(I_e=1)
        +2\sum_e\sum_{f\in\mathcal D(e)}\Prob(I_e=1)
        \le C d^3\sum_e \Prob(I_e=1),
\end{aligned}
\]
since independent pairs have zero covariance and
$\operatorname{Cov}(I_e,I_f)\le \Prob(I_e=1)$.  Therefore
\begin{equation*}
        \Var(Y_\tau)\le C d^3\mu_\tau.
\end{equation*}
Chebyshev's inequality gives
\begin{equation*}
        \Prob\left(Y_\tau<\frac{\mu_\tau}{2}\right)
        \le
        \frac{C d^3}{\mu_\tau}
        \le
        \frac{C d^6}{n}.
\end{equation*}
There are $O(d)$ signed types, so the union bound gives failure probability
\[
        O\left(\frac{d^7}{n}\right)=o(1)
\]
under \eqref{eq:d-range}.  This proves the lemma.
\end{proof}

\section{Completion of the proof}
\label{sec:completion}

We now prove Theorem~\ref{thm:main}.
\begin{proof}
Fix $\varepsilon>0$, and let $n$ be sufficiently large.  Expose the threshold
random subgraph $H_0$, set $z=q-M$, and put
\[
        R_0=
        \sum_{j=1}^{d-1}|P_j(z)|+d|S(z)|.
\]
Let $\mathcal A$ be the event that all signed types have supply at least
$c_Snd^{-3}$.  By Lemma~\ref{lem:supply}, $\Prob(\mathcal A)=1-o(1)$.  By
Lemma~\ref{lem:expected-refined} and the inequality $d^3\le d^2\sqrt n$, valid
for all sufficiently large $n$ in the range \eqref{eq:d-range},
\[
        \E R_0\le C d^2\sqrt n.
\]
Markov's inequality gives
\[
        \Prob\left(R_0\le 4C d^2\sqrt n\right)\ge \frac34.
\]
For all sufficiently large $n$, also $\Prob(\mathcal A)>3/4$.  Hence the event
\[
        \mathcal A\cap\left\{R_0\le 4C d^2\sqrt n\right\}
\]
has positive probability.  Fix a realization of $H_0$ in this intersection.
After increasing $C$, this realization satisfies
\begin{equation*}
        \sum_{j=1}^{d-1}|P_j(z)|+d|S(z)|
        \le
        C d^2\sqrt n.
\end{equation*}

Both $q$ and $M$ have coordinate sum $n$.  Also
\[
        \sum_{i=0}^d iM_i=2|E(H_0)|
\]
is even, while $\sum_i iq_i$ is even by \eqref{eq:target-vector}.  Hence
$z=q-M\in L_d$.  Lemma~\ref{lem:refined-lattice} gives
\begin{equation}
\label{eq:z-representation}
        z=
        \sum_{\substack{0\le a,b\le d-1\\a+b\in\{d-2,d-1\}}}
        \lambda_{a,b}\beta_{a,b},
\end{equation}
with
\begin{equation}
\label{eq:R-bound-final}
        R:=\sum_{a,b}|\lambda_{a,b}|
        \le
        C d^2\sqrt n.
\end{equation}

The representation in \eqref{eq:z-representation} uses ordered pairs.  Since
$\beta_{a,b}=\beta_{b,a}$, combine the coefficients by setting, for $a<b$,
\[
        \Lambda_{a,b}=\lambda_{a,b}+\lambda_{b,a},
        \qquad
        \Lambda_{a,a}=\lambda_{a,a}.
\]
Then
\begin{equation*}
        z=
        \sum_{\substack{0\le a\le b\le d-1\\a+b\in\{d-2,d-1\}}}
        \Lambda_{a,b}\beta_{a,b},
        \qquad
        \sum_{a\le b}|\Lambda_{a,b}|\le R.
\end{equation*}

For a pair $(a,b)$ with $\Lambda_{a,b}>0$, we shall add
$\Lambda_{a,b}$ positive candidates of type $(a,b)$.  For a pair $(a,b)$ with
$\Lambda_{a,b}<0$, we shall delete $|\Lambda_{a,b}|$ negative candidates of type
$(a,b)$.

All candidate types in this paragraph are evaluated with respect to the original
threshold graph $H_0$; the selected edge additions and deletions are applied only
after all choices have been made.  We choose all these candidate edges greedily,
requiring them to be vertex-disjoint.  At any point fewer than $R$ edges have
already been selected.
A previously selected edge removes itself if it has the same signed type, and it
forbids at most $2d$ further candidates of any fixed signed type that share one
of its endpoints.  Thus fewer than $(2d+1)R$ candidates of any fixed signed type
are unavailable.  By \eqref{eq:R-bound-final},
\[
        (2d+1)R\le C d^3\sqrt n.
\]
On the other hand, by \eqref{eq:supply-lower}, the supply of each signed type is
at least $c_Snd^{-3}$.  Since $d\le n^{1/12-\varepsilon}$,
\[
        \frac{c_Snd^{-3}}{C d^3\sqrt n}
        \ge
        c'\frac{\sqrt n}{d^6}
        \ge
        c'n^{6\varepsilon}\to\infty.
\]
Thus, for all sufficiently large $n$, the supply of each signed type is larger
than the number of candidates of that signed type that can have been made
unavailable by the previously selected edges.  Therefore the greedy procedure can
select all required candidates vertex-disjointly.

Starting from $H_0$, add all selected positive candidates and delete all selected
negative candidates.  Since the selected edges are vertex-disjoint, the local
degree-count changes do not interfere.  The total change in the degree-count
vector is
\[
        \sum_{a\le b}\Lambda_{a,b}\beta_{a,b}=z.
\]
Hence the resulting spanning subgraph $H$ satisfies
\[
        (m(H,0),\ldots,m(H,d))=M+z=q.
\]
This proves \eqref{eq:exact-target}.

It remains to justify the final consequence of the theorem.  Write
\[
        n=(d+1)a+r,
        \qquad
        0\le r\le d.
\]
If $r>0$, choose a subset $S\subseteq\{0,1,\ldots,d\}$ of size $r$ such that
\[
        a\sum_{i=0}^d i+\sum_{i\in S}i
        \equiv 0\pmod 2.
\]
Both parities of $\sum_{i\in S}i$ are attainable.  Indeed,
start with any $r$-set.  Since $1\le r\le d$, both this set and its complement
are nonempty.  If there were no two elements of opposite parity on different
sides of the cut, then all elements of $\{0,1,\ldots,d\}$ would have the same
parity, impossible for $d\ge2$.  Thus swapping two elements of opposite parity
across the cut flips the parity of the sum while preserving the size $r$.  Define
$q_i=a+1$ for $i\in S$ and $q_i=a$ otherwise.

If $r=0$, start with $q_i=a$ for all $i$.  If $\sum_i iq_i$ is even, keep this
choice.  If it is odd, replace $q_0,q_1$ by $a+1,a-1$, respectively.  Uniformly
in the range \eqref{eq:d-range}, we have
\[
        a=\frac{n}{d+1}\to\infty,
\]
so $a-1\ge0$ for all sufficiently large $n$.  The new vector still satisfies
\eqref{eq:target-vector}.  Thus a parity-compatible target vector satisfying
\eqref{eq:target-vector} always exists.  Applying the exact statement to this
$q$ proves the last assertion.
\end{proof}

\section*{Acknowledgments and AI disclosure}
The third author thanks Shanghai Institute for Mathematics and Interdisciplinary Sciences (SIMIS), China for their financial support. This research was partly funded by SIMIS, China under grant number SIMIS-ID-2024-WE. The third author is grateful for the resources and facilities provided by SIMIS, which were essential for the completion of this work.

During the development and preparation of this work, the authors used ChatGPT for preliminary, non-authoritative assistance, including organizational discussion, language polishing, and exploratory discussion of possible proof strategies.  The tool was not treated as a mathematical authority or cited source.  No mathematical claim, computation, or argument was included in the paper without independent verification, substantial revision, and final approval by the authors.  All theorem statements, proofs, computations, references, and final arguments were checked, revised, and finalized by the authors, who take full responsibility for the correctness, originality, and integrity of the paper.

\end{document}